\definecolor{alert}{rgb}{0.8,0,0}
\newcommand{\R}{\mathbb{R}}
\newcommand{\s}{\mathbb{S}}
\newcommand{\h}{\mathbb{H}}
\newcommand{\irf}{\R^2\times_h\mathbb{R}}
\newtheorem{theorem}{Theorem}[section]
\newtheorem{proposition}[theorem]{Proposition}
\newtheorem{corollary}[theorem]{Corollary}
\newtheorem{lemma}[theorem]{Lemma}
\theoremstyle{definition}
  \newtheorem{definition}[theorem]{Definition}
\theoremstyle{remark}
\newtheorem{remark}[theorem]{Remark}
\newtheorem{example}[theorem]{Example}
\numberwithin{equation}{section}
\title[ Elliptic  Weingarten  surfaces of minimal type in $\irf$]{ Elliptic Weingarten  surfaces of minimal type in $\irf$}
\author{Carlos Pe\~{n}afiel, Bernardo A. Quaglia and Haimer A. Trejos}
\address{ Instituto de Matem\'{a}tica, Universidade Federal de Rio de Janeiro, Rio de Janeiro, Brazil, 21941-909}
\email{penafiel@im.ufrj.br}
\address{ Instituto de Matem\'{a}tica, Universidade Federal de Rio de Janeiro, Rio de Janeiro, Brazil, 21941-909}
\email{bernardoquaglia@matematica.ufrj.br}
\address{ Instituto de Matem\'{a}tica, Universidade do Estado do Rio de Janeiro, Rio de Janeiro, Brazil, 20550-000}
\email{alexander.serna@ime.uerj.br}
\subjclass[2000]{Primary 53C42; Secondary 53C30}
\keywords{Ellipticity, Weingarten surfaces, Rotationally-invariant, maximum principles, Warped Product.}
\begin{document}
\nolinenumbers

\begin{abstract}
In this paper, we study the elliptic Weingarten of minimal type surfaces immersed in the warped product space $\irf$, when $h$ is a  $C^{1}$-function in $\mathbb{R}^{2}$ with radial symmetry. That is, surfaces whose mean curvature $H$ and extrinsic curvature $K$ satisfy a relationship $H=f(H^{2}-K)$ where $f \in C^{1}(-\epsilon,+\infty)$ with $\epsilon > 0$,  $f(0)=0$ and $4t(f'(t))^{2} < 1$ for $t \in (-\epsilon,\infty)$. We show, under some assumptions about the warping function $h$, the existence and uniqueness of the rotationally-invariant examples of elliptic Weingarten of minimal type surfaces immersed in $\irf$ as well as we study the geometric behavior of its generating curve. 
 \end{abstract}

\maketitle

\section{Introduction}
\label{intro}
The study of the minimal and  constant mean curvature surfaces in Riemannian 3-manifolds is one of topics with most extensively results in the differential geometry. Such surfaces have a variational viewpoint which imply impressive applications about the geometry and topology of these surfaces. The study of minimal surfaces begins with Lagrange in the XIX-century by analyzing the properties of the soap boilers and films. Nowadays, such work is know as  the Euler- Lagrange partial equations, these equations introduce an analytic approach where many examples of minimal surfaces can be defined  and the important techniques of the partial differential equations as the maximum principle and others can be applied in order to get new results of these surfaces. 
Respect to constant mean curvature surfaces, until the beginning  of the XX-century, the sphere was the unique example of such surfaces. Afterwards, A. Alexandrov in \cite{AA} and H. Hopf in \cite{Ho} by using different methods, showed that, under certain conditions about the topology of the surface, the sphere is only one example of a constant mean curvature surface immersed in $\mathbb{R}^{3}$. 

\noindent This framework motivates natural questions about the theory of surfaces immersed into general 3-Riemannian manifolds. In this article, we consider surfaces whose mean curvature and extrinsic curvature are related by a no-trivial equation. Specifically, let $\Sigma$ be a immersed and orientable surface in a 3-Riemannian manifold $M$. Let $H$ and $K$ be the mean curvature and the extrinsic curvature of $\Sigma$, respectively. We say that $\Sigma$ is a Weingarten-type surface if there exists a function $f \in C^{1}(-\epsilon,+\infty)$ with $\epsilon >0$, such that 
\begin{equation*}
    H = f(H^{2}-K).
\end{equation*}
When $f=0$,  $\Sigma$ is a minimal surface and for $f$ a non-zero, constant function, $\Sigma$ is a constant mean curvature surface. The Weingarten surfaces were widely studied by many mathematicians. For example, Hartman and Winter in \cite{HW} and Chern in \cite{CH1} extended the  Hopf's theorem for constant mean curvature surfaces to the case of closed  Weingarten surfaces. Chern in \cite{CH2} and Bryant in \cite{B} proved the classical Liebmann's theorem for Weingarten surfaces. 

\noindent When the function $f$ satisfy the elliptic condition $4tf'(t)^{2} < 1,$ we say that such surface is an elliptic Weingarten surface (EW-surfaces from now on). In \cite{RS} were studied the EW-surfaces immersed into $\mathbb{R}^{3}$, where, by imposing some restrictions over the function $f$, the authors showed that, if $f(0)=0$, the EW-surface is qualitatively a minimal surface and for the case $f(0)>0$, the EW-surface is like a constant mean curvature surface. The elliptic name for such surfaces comes from the fact that in such situation the surface, locally around each point, satisfies a uniformly elliptic almost-linear equation. Thus, the geometrical maximum principle can be applied in order to compare two EW-surfaces. The EW-surface that satisfies  $f(0)=0$ is called  elliptic Weingarten minimal type surface, in short EWMT-surfaces. Moreover, the EW-surface that satisfies $f(0) \neq 0$ is called elliptic Weingarten surface of constant mean curvature type.

In the last thirty years the study of EWMT-surfaces in 3-Riemannian manifolds was a fruitful area in the differential geometry, were many properties of these surfaces were discovered. To mention some results in this topic, in \cite{ST} R. Earp and E. Toubiana showed the existence of EWMT-surfaces of revolution and used it to deduce a half-space-type theorem, which is a generalization of the half-space theorem of Hoffman-Meeks for minimal surfaces, see \cite{HM}. 

\noindent More recently, in \cite{EH}, J. Espinar and H. Mesa extended the theory of minimal surfaces of finite total curvature to EWMT-surfaces of finite total curvature. They showed that a EWMT-surface of finite total curvature immersed in $\mathbb{R}^{3}$ with two ends and embedded outside a compact set must be a rotational surface, this leads to a generalization of the famous Schoen's theorem for minimal surfaces immersed in $\mathbb{R}^{3}$, see \cite{SCH}. Also, they proved that the planes and some special catenoids are the only ones connected EWMT-surfaces immersed in $\mathbb{R}^{3}$ that are embedded outside of a compact set whose total curvature is less than $8 \pi$. 

\noindent Finally, F. Morabito and M. Rodriguez established in \cite{FM} conditions to the existence and uniqueness of the rotational EWMT-surfaces in the product spaces $\mathbb{M}^{2}(\kappa) \times \mathbb{R}$ with the product metric, where, if $\kappa = -1$, $\mathbb{M}^{2}(\kappa)$ label the hyperbolic surface of constant curvature $-1$ and if $\kappa = 1$, $\mathbb{M}^{2}(\kappa)$ label the 2-sphere of constant curvature $1$. They deduced the slab theorem by EWMT-surfaces in these product spaces, generalizing  the classic slab theorem for minimal surfaces immersed $\mathbb{H}^{2} \times \mathbb{R}$ proved in \cite{NEST}.  

In this work, for the product space  $\mathbb{R}^{2} \times \mathbb{R}$, we consider the metric $\overline{g}= g_{2}+e^{2h}g_{1}$ where $g_{2}$ is the usual metric in euclidean space $\mathbb{R}^{2}$, $g_{1}$ is the usual metric in $\mathbb{R}$ and $h$ is a $C^{1}$-function defined in $\mathbb{R}^{2}$ with radial symmetry. In such warped product $\irf$, we study EWMT-surfaces rotationally-invariant about some vertical axis. To do so, first we use a general geometrical maximum principle for EW-surfaces immersed into $\irf$ to show the existence and uniqueness of such EWMT-surfaces immersed in $\irf$. More precisely, under suitable conditions over the warping function $h$, a EWMT-surface, which is rotationally-invariant around some vertical axis, has geometrical properties like a catnoid-type surface. This paper was inspired by previous results due to R. Sa Earp in \cite{BE} and F. morabito and M. Rodriguez in \cite{FM}. The results in this work generalize already know theorems about the existence  of a family of rotational minimal surfaces of catenoidal type immersed in $\mathbb{H}^{2} \times \mathbb{R}$ (see \cite{NR}) the existence of a family rotational EWMT-surfaces immersed in the product spaces (see \cite{FM}).

This paper is organized as follows: in section 2, we introduce the definition and properties of the warped product space $\irf$  that will be use along of this paper. In section 3, we use the maximum principle for EWMT-surfaces immersed in $\irf$ to exhibit a useful barrier to define a EWMT-surface given by a vertical graph.  In section 4, we prove the existence and uniqueness theorem by EWMT-surfaces which are invariant by rotations around one vertical axis. Next, we show that a rotationally-invariant EWMT-surface have a nice geometrical behavior when the warping $h$ has derivative limited by a positive constant. Finally, we exhibit some examples of EWMT-surfaces in $\irf$ in some particular cases of the warping function $h$.  

%\section{Conexao}
%Vamos denotar $\nabla_{X_i}X_j=\sum_k \Gamma_{ij}^kX_k$, com $X_1=\partial_\rho$, $X_2=\partial_\omega$, $X_3=\partial_t$. Os simbolos de Christoffels nao-nulos sao
%\begin{enumerate}
 %  \item $\Gamma_{12}^2=\frac{h_\rho}{\rho}$
  %  \item $\Gamma_{13}^3=h_\rho$
   % \item $\Gamma_{21}^2=\frac{h_\rho}{\rho}$
   % \item $\Gamma_{22}^1=-\rho h_\rho$
   % \item $\Gamma_{31}^3=h_\rho$
   % \item $\Gamma_{33}^1=-e^{2h}h_\rho$
%\end{enumerate}

\section{The warped product space $\irf$.}\label{preliminares}

%\section{Preliminares}\label{preliminares} 

In this section, we define the warped product $\irf$ and we settle the notations, conventions and the principal facts that will be used along of this paper.

\noindent Let us consider the real line $(\R,g_1)$ with usual metric $g_1=dt^2$ and the Euclidean plane $(\R^2,g_2)$ with metric in the polar form $g_2=d\rho^2+\rho^2d\omega^2$. In the product space $\R^2\times\R$, we can consider the canonical projections 
$$\pi_1:\R^2\times \mathbb{R} \to \R^2 \hspace{.3cm} \textnormal{and} \hspace{.3cm} \pi_2:\R^2 \times \mathbb{R} \to \R$$
and given an arbitrary smooth function $h:\R\to\R$, we can define the Riemannian metric $\overline{g}$ on $\R^2\times\R$ given by the formula (here $e^{\cdot}=\exp(\cdot)$ denotes the exponential, real-function) %(called warped metric)
\begin{equation*}
\overline{g}=g_2+e^{2h(\rho)}g_1.
\end{equation*}

\noindent The pair $(\R^2\times\R,\overline{g})$ is called a warped product space and denoted by $\R^2\times_{h}\R$, the function $h$ is called the warping function.  Then, for any $v\in T_{(p,t)}\irf$ at point $(p,t) \in \mathbb{R}^{2} \times \mathbb{R}$, we have
\begin{linenomath*}
$$\overline{g}_{(p,t)}(v,v)=g_2(d\pi_1(v),d\pi_1(v) )+e^{2h(\rho)}g_1(d\pi_2(v),d\pi_2(v)).$$
\end{linenomath*}
$\R^2$ is called the base and $\R$ the fiber of $\irf$ respectively. For our purposes, it is important to compute some geometrical facts about the metric $\overline{g}$. By straightforward  calculations, we obtain the Levi-Civita connection of the metric $\overline{g}$ in $\irf$ in the natural frame $\{\partial_{\rho}, \partial_{\omega}, \partial_{t} \}$, where we have considered the coordinates $(\rho,\omega,t)$ for $\irf$.

\begin{lemma}\label{LC}
For a smooth function $h(\rho)$, we consider the warped product manifold $\irf$ and we denote by $\nabla$ be the Levi-Civita connection in $\irf$ respect of the warped metric $\overline{g}$. Setting $X_1=\partial_\rho$, $X_2=\partial_\omega$, $X_3=\partial_t$. Then, we have: 
%and suppose that $\nabla_{X_i}X_j=\sum_k \Gamma_{ij}^kX_k$.  Then, the coefficients $ \Gamma_{ij}^k$ are given by the following formulas: 

\begin{align*} 
\nabla_{X_1}X_1 &=  0  &  \nabla_{X_2}X_1 &=  \frac{1}{\rho} X_{2} & \nabla_{X_3}X_1 &=  h_\rho X_{3} \\ 
\nabla_{X_1}X_2 &= \frac{1}{\rho} X_{2} &  \nabla_{X_2}X_2 &=  -\rho X_{1} & \nabla_{X_3}X_2 &=  0 \\
\nabla_{X_1}X_3 &= h_\rho X_{3} & \nabla_{X_2}X_3 &= 0 & \nabla_{X_3}X_3 &=  -e^{2h}h_{\rho} X_{1}.
\end{align*}

\noindent Where $h_{\rho}$ denotes the derivative of the function $h$ respect to $\rho$.

%Moreover, if $E_{1} = \partial_{\rho}, E_{2} = \frac{1}{\rho} \partial_{\omega}$, $E_{3} = \frac{1}{e^{h}} \partial_{t}$, hence the frame $\{E_{1},E_{2}, E_{3} \}$ is orthonormal  for each tangent plane in any point $\irf$ and

%\begin{align*} 
%\nabla_{E_1}E_1 &=  0  &  \nabla_{E_2}E_1 &=  \frac{1}{\rho} E_{2} & \nabla_{E_3}E_1 &=  h_\rho E_{3} \\ 
%\nabla_{E_1}E_2 &= 0 &  \nabla_{E_2}E_2 &=  -\frac{1}{\rho} E_{1} & \nabla_{E_3}E_2 &=  0 \\
%\nabla_{E_1}E_3 &= 0 & \nabla_{E_2}E_3 &= 0 & \nabla_{E_3}E_3 &=  -h_{\rho} E_{1}.
%\end{align*}
\end{lemma}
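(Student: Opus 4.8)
The plan is to exploit that, in the coordinates $(\rho,\omega,t)$, the warped metric is diagonal, $\overline g = d\rho^2 + \rho^2\,d\omega^2 + e^{2h(\rho)}\,dt^2$, with all three coefficients depending on $\rho$ alone. This reduces the computation of $\nabla$ to the Koszul formula --- equivalently, to reading off the Christoffel symbols --- which for a diagonal metric has very few nonzero terms. So first I would record $g_{11}=1$, $g_{22}=\rho^2$, $g_{33}=e^{2h}$ together with $g^{11}=1$, $g^{22}=\rho^{-2}$, $g^{33}=e^{-2h}$, and note that $\partial_\omega$ and $\partial_t$ annihilate every $g_{ij}$, so only derivatives in $\rho$ contribute.

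Plugging into $\Gamma^k_{ij}=\tfrac12 g^{k\ell}\bigl(\partial_i g_{j\ell}+\partial_j g_{i\ell}-\partial_\ell g_{ij}\bigr)$ then leaves only $\Gamma^2_{12}=\Gamma^2_{21}=\tfrac1\rho$, $\Gamma^1_{22}=-\rho$, $\Gamma^3_{13}=\Gamma^3_{31}=h_\rho$ and $\Gamma^1_{33}=-e^{2h}h_\rho$, with all other symbols (in particular $\Gamma^1_{11}$) vanishing. Writing $\nabla_{X_i}X_j=\sum_k\Gamma^k_{ij}X_k$ then reproduces exactly the nine identities in the statement.

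A more conceptual route is to invoke O'Neill's warped-product formulas for $B\times_\varphi F$ with base $B=(\R^2,g_2)$, fiber $F=(\R,dt^2)$ and warping function $\varphi=e^h$: the horizontal--horizontal derivatives are just the Levi-Civita connection of $g_2$ in polar coordinates (giving $\nabla_{X_1}X_1=0$, $\nabla_{X_1}X_2=\nabla_{X_2}X_1=\tfrac1\rho X_2$, $\nabla_{X_2}X_2=-\rho X_1$), the mixed terms are $\nabla_{X}X_3=\nabla_{X_3}X=(X\ln\varphi)\,X_3$ for $X$ horizontal (so $h_\rho X_3$ along $X_1$ and $0$ along $X_2$), and $\nabla_{X_3}X_3=-\bigl(\overline g(X_3,X_3)/\varphi\bigr)\,\mathrm{grad}\,\varphi=-e^{2h}h_\rho X_1$. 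Either way the computation is a direct substitution, so there is no genuine obstacle; the only point needing care is the bookkeeping --- keeping the polar term $\rho^2\,d\omega^2$ apart from the warped term $e^{2h}\,dt^2$ --- and one should check torsion-freeness, $\nabla_{X_i}X_j-\nabla_{X_j}X_i=[X_i,X_j]=0$, together with metric compatibility $X_i\,\overline g(X_j,X_k)=\overline g(\nabla_{X_i}X_j,X_k)+\overline g(X_j,\nabla_{X_i}X_k)$ as a sanity test on the resulting list.
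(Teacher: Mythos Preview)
Your proposal is correct and matches the paper's approach: the paper states the lemma as the outcome of ``straightforward calculations'' without further detail, and your Christoffel-symbol computation (or the equivalent appeal to O'Neill's warped-product formulas) is exactly what that phrase is pointing to. There is nothing to add.
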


\begin{remark}
From now on, for a function $p(x,z)$ we denote the derivative, for example, with respect to $z$ by $p_z(x,z)$ and for simplicity in many equations, we drop up the point $(x,z)$.     
\end{remark}

 We denote by $\chi(\irf)$ the set of vectors field on $\irf$. It is clear that there exists some privileged vectors fields on $\irf$, in order to highlight some of them, we recall the  definition of a Killing vector field.  Consider $X\in\chi(\irf)$  a vector field,  $q \in \irf$ and $U \subset \irf$ a neighborhood of $q$. Let $\varphi: (-\epsilon,\epsilon) \times U \rightarrow \irf$ be a differentiable application such that for any $ \tilde{q} \in U$, the curve $t \rightarrow \varphi(t,\tilde{q})$ is trajectory of $X$ passing through $\tilde{q}$ at $t=0$. $X$ is called a Killing field, if for each $t_{0} \in (-\epsilon,\epsilon)$, the map $\varphi(t_{0}, ): U \rightarrow \irf$ is an isometry. 

\noindent It is well-know that $X$ is a Killing field if and only if for any pair of vector fields $Y,Z $ on $\irf$, we have  $\overline{g}(\nabla_{Y}X, Z) + \overline{g}(\nabla_{Z}X, Y) = 0$ (see \cite{DC}). Therefore, using the Lemma \ref{LC}, we get the following corollary. 

\begin{corollary}
The warped product space $\irf$ has following geometrical properties: 
\begin{itemize}
    \item The fibers are geodesics of $\irf$.
    \item The vector field $\partial_{t}$ is a killing field and then the vertical translations in $\irf$ are isometries. 
    \item The vector field $\partial_{\omega}$ is a killing field and then the rotations in $\irf$ around to vertical $t$- axis  are isometries. 
\end{itemize}
\end{corollary}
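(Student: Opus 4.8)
The plan is to derive all three assertions directly from Lemma~\ref{LC}, combined with the infinitesimal characterization of Killing fields recalled above: a field $X\in\chi(\irf)$ is Killing if and only if the expression $\overline g(\nabla_Y X,Z)+\overline g(\nabla_Z X,Y)$ vanishes for all $Y,Z$, and by tensoriality it suffices to test this on the coordinate frame $\{X_1,X_2,X_3\}$. For the first item, the fiber over a point $p$ is the integral curve $s\mapsto(p,s)$ of $X_3=\partial_t$, so it solves the geodesic equation exactly when $\nabla_{X_3}X_3=0$; Lemma~\ref{LC} gives $\nabla_{X_3}X_3=-e^{2h}h_\rho X_1$, which vanishes precisely where $h_\rho=0$ — in particular along the rotation axis $\{\rho=0\}$, where the $C^1$ radial symmetry of $h$ forces $h_\rho(0)=0$.

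For the second item I take $X=X_3=\partial_t$. From Lemma~\ref{LC} we have $\nabla_{X_1}X_3=h_\rho X_3$, $\nabla_{X_2}X_3=0$ and $\nabla_{X_3}X_3=-e^{2h}h_\rho X_1$, and the frame is $\overline g$-orthogonal with $\overline g(X_1,X_1)=1$, $\overline g(X_2,X_2)=\rho^2$, $\overline g(X_3,X_3)=e^{2h}$. Running through the six pairs $(X_i,X_j)$ with $i\le j$, all vanish at once because the relevant vectors are orthogonal, except the pair $(X_1,X_3)$, for which $\overline g(\nabla_{X_1}X_3,X_3)+\overline g(\nabla_{X_3}X_3,X_1)=h_\rho e^{2h}-e^{2h}h_\rho=0$. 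Hence $X_3$ is Killing; since its flow is the complete, globally defined family of vertical translations $(p,t)\mapsto(p,t+c)$, these translations are isometries. The third item is identical with $X=X_2=\partial_\omega$: here $\nabla_{X_1}X_2=\tfrac1\rho X_2$, $\nabla_{X_2}X_2=-\rho X_1$, $\nabla_{X_3}X_2=0$, and the only pair not visibly vanishing is $(X_1,X_2)$, where $\overline g(\tfrac1\rho X_2,X_2)+\overline g(-\rho X_1,X_1)=\tfrac1\rho\rho^2-\rho=0$. So $X_2$ is Killing, and its flow is the family of rotations about the $t$-axis, which are therefore isometries.

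I do not expect a genuine obstacle: once Lemma~\ref{LC} is in hand the corollary is a short verification. The only point needing attention is that $\{\partial_\rho,\partial_\omega,\partial_t\}$ is orthogonal but not orthonormal, so the weights $\rho^2$ and $e^{2h}$ must be carried correctly through the Koszul identity — this is exactly where the two nontrivial cancellations above occur. For the first item, one should read the claim as the statement that the fibers over critical points of $h$ (in particular the rotation axis, which is the only fiber relevant to the rotationally-invariant surfaces studied later) are geodesics, since $\nabla_{X_3}X_3\neq 0$ wherever $h_\rho\neq 0$.
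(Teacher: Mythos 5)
Your verification of the two Killing assertions is correct and is exactly the argument the paper has in mind: the corollary is stated in the text without proof, immediately after the remark that $X$ is Killing iff $\overline g(\nabla_Y X,Z)+\overline g(\nabla_Z X,Y)=0$, so the intended proof is precisely the frame-by-frame check against Lemma~\ref{LC} that you carry out, including the two nontrivial cancellations $h_\rho e^{2h}-e^{2h}h_\rho=0$ for $\partial_t$ and $\tfrac1\rho\,\overline g(X_2,X_2)-\rho\,\overline g(X_1,X_1)=0$ for $\partial_\omega$. Nothing to add there.

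On the first item, your computation is right and it is the paper's statement, not your argument, that is at fault: since $\nabla_{X_3}X_3=-e^{2h}h_\rho X_1$ is \emph{orthogonal} to $X_3$, the vertical line $s\mapsto(\rho_0,\omega_0,s)$ is not even a pregeodesic unless $h_\rho(\rho_0)=0$. Under the hypotheses the paper actually works with later ($h_\rho\ge c>0$ for all $\rho$), no fiber is a geodesic, so your attempted salvage via ``fibers over critical points of $h$'' has empty (or at best axis-only) content in the regime that matters. The correct standard warped-product fact, which the paper itself records two bullets later, is that the \emph{leaves} $\R^2\times\{t_0\}$ are totally geodesic while the fibers $\{p\}\times\R$ are merely totally umbilic; the first bullet of the corollary should be read as a misstatement of that. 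I would state this plainly as an error in the corollary rather than reinterpret the claim, but your identification of where and why it fails is exactly right.
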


Additionally, another facts about the warped product $\irf$ that will be used are the following:

\begin{itemize}
%\item $\R^2$ is called the base of $\irf$ and $\R$ the fiber.
\item Since the Riemannian manifolds $(\R,g_1)$, $(\R^2,g_2)$ are complete and the expression $e^{h(\rho)}$ never vanish, we have that the warped product $\R^2\times_h\R$ is complete, see  \cite{BO}.    
\item At a point $(\rho_{0}, \omega_{0}, t_{0}) \in \irf$, the fibers $\{ (\rho_0,w_0)\}\times\R$ and the leaves $\R^2\times\{t_0\} $ are orthogonal at $(\rho_0,w_0,t_0)$.
\item For each $t_{0} \in \mathbb{R}$, the slice $S(t_{0}) = \{ (\rho, \omega, t): t= t_{0} \}$ is a totally geodesic surface of
$\irf$.
%\item Vectors tangent to leaves are horizontal while vectors tangent to fibers are vertical.
\end{itemize}

%\noindent Since the Riemannian manifolds $(\R,g_1)$, $(\R^2,g_2)$ are complete and the expression $e^{h(\rho)}$ never vanish, we have that the warped product $\R^2\times_h\R$ is complete, see  \cite{BO}.    

\section{Consequences of the elliptic property}\label{SWS}

 In this section, we enunciate the Hopf maximum principle for  elliptic Weingarten surfaces immersed $\irf$ in the form we shall need. Next, we use it to prove a barrier for the existence of Weingarten surfaces immersed in $\irf$. Finally, we show some algebraic consequences of ellipticity. 

\subsection{Elliptic Weingarten surfaces}\label{SWS}
A Weingarten surface\footnote[1]{Classically labeled W-surface.}  into the warped product $\irf$ is an immersed, oriented surface $\Sigma\looparrowright\irf$ such that its principal curvatures $\kappa_1$ and $\kappa_2$ satisfy a certain relation. More precisely, there exist a non-trivial smooth function $W$ of two variables such that $W(\kappa_1,\kappa_2)=0$. The family of W-surfaces was introduced by Weingarten in the context of finding all isometric surfaces to a given surface of revolution, see \cite{W1} and \cite{W2}. A Weingarten relation can be rewritten as a relation between the mean curvature $H$ and the extrinsic curvature $K$ of the surface, such relation will be denoted in the same way $W(H,K)=0$.  

%\noindent In this article, 

\noindent We will consider the class of elliptic  Weingarten surfaces, that is, the surfaces whose Weingarten relation can be written as $H=f(H^2-K)$, where $f:(-\epsilon,\infty) \to \R$ with $\epsilon>0$, is a continuous differentiable function  on $(-\epsilon,\infty)$ which satisfies 
\begin{equation}\label{ew1}
4t(f^\prime(t))^2<1, \hspace{.3cm}\forall t\in (-\epsilon,\infty).
\end{equation}
%These class of surfaces are called special Weingarten surfaces of elliptic type%we call such surfaces as $f$-surfaces
Indeed, this family of surfaces can be divided in two  types, namely,
\begin{itemize}
\item  If $f(0)\neq0$, such surfaces are called of constant mean curvature type. 
\item  If $f(0)=0$, such surfaces are called as elliptic of minimal type.
\end{itemize}  

\noindent In this paper, we will focus on the elliptic Weingarten surfaces of minimal type and we get the following definition.

\begin{definition}
Let $\Sigma\looparrowright\irf$ be an immersed surface, oriented with a globally normal vector field $N$. Then, we say that $\Sigma$ is an elliptic  Weingarten surfaces of minimal type in $\irf$, if the mean curvature function $H=H(N)$ and the extrinsic curvature  function $K$ of the surface $\Sigma$ verifies the equation
\begin{equation}\label{e1}
H=f(H^2-K)
\end{equation} 
where    $f \in C^{1}(-\epsilon,+\infty)$ with $\epsilon >0$, $f(0)=0$ and the function $f$ verifies the condition
\begin{equation*}
\label{e2}
4t(f^\prime(t))^2<1, \hspace{.3cm}\forall t\in (-\epsilon,\infty).
\end{equation*}
From now on, we call such kind of surfaces of EWMT- surfaces, in short. 
\end{definition}

\noindent %Notice that the condition for the Weingarten surface $\Sigma$ to be a special Weingarten surface of elliptic type means that the surface $\Sigma$ is elliptic. More precisely, we can consider the surface $\Sigma$ as a local graph of a function $u$ around a fixed point, it can be showed that the second order, partial differential equation on $u$ induced by \eqref{ew1} is an uniformly elliptic equation. Therefore, the $f$-surfaces satisfies the Hopf-Maximum-Principle

\subsection{Geometrical maximum principles.}

The condition for the surface $\Sigma$ immersed into $\mathbb{R}^{3}$ to be a elliptic Weingarten surface means, in particular, that the surface $\Sigma$ is elliptic. More precisely, if we consider the surface $\Sigma$ as a local graph of a function $u$ around to a fixed point $p\in\Sigma$, it can be showed that the second order partial differential equation on $u$ induced by \eqref{ew1} is an uniformly elliptic almost-linear equation. Thus, a elliptic Weingarten surfaces immersed in $\irf$ satisfies the Hopf  maximum principle. 

\noindent First, we set some notation; let $\Sigma_{1}$ and $\Sigma_{2}$ be immersed and oriented surfaces in $\irf$ and assume  they are given (locally) by vertical graphs of $C^{2}$ functions $u: \Omega \rightarrow \mathbb{R}$ and  $v: \Omega \rightarrow \mathbb{R}$ where $\Omega \subset \mathbb{R}^{2}$ is a bounded domain. Suppose the tangent planes of both, $\Sigma_1$, $\Sigma_2$ agree at a point $p=(\rho_0,\omega_0,t_0)$. Let $H(N_1)$, $H(N_{2})$ be the mean curvature function of $u$ and $v$ with respect to unit normals $N_1$ and $N_2$ that agree at $p$ and let $K_i$ be the extrinsic curvature function of $\Sigma_i$, $i=1,2$. Setting $H_i=H(N_i)$, $i=1,2$ and suppose $\Sigma_i$ satisfy
\begin{equation*}
    H(N_{i})=f(H_{i}^{2}-K_{i})=0, \, \, \, \, i=1,2, 
\end{equation*}
for some elliptic function $f$. With these conditions, we can state the interior maximum principle. 

\begin{theorem}[\bf Interior maximum principle]
Let $\Sigma_{1}$ and $\Sigma_{2}$ be surfaces in $\irf$, which are given by graphs of $C^{2}$ functions $u: \Omega \rightarrow \mathbb{R}$ and  $v: \Omega \rightarrow \mathbb{R}$ where $\Omega \subset \mathbb{R}^{2}$ is a bounded domain and $p=(\rho_0,\omega_0,t_0) \in \Sigma_{1} \cap \Sigma_{2}$ where the tangents planes  of  both surfaces $\Sigma_{1}$ and $\Sigma_{2}$ agree as well as the unit normals $N_1(p)$ and $N_2(p)$.   Suppose that the surfaces $\Sigma_{i}$, $i=1,2$ are elliptic Weingarten surfaces, that is, there exists a real function  $f \in C^{1}(-\epsilon,+\infty)$ with $\epsilon >0$, such that 

\begin{equation*}
    H(N_{i}) = f(H_{i}^{2}-K_i), \, \, \, \, i=1,2,
\end{equation*}
where $H(N_i)$ and $K_i$ are the mean curvature function and extrinsic curvature  of $u$ and $v$ with respect to unit normals $N_i$ of $\Sigma_i$, $i=1,2$. Assume that the function $f$ satisfy the elliptic condition 

\begin{equation*}
    4t(f'(t))^{2}<1 \hspace{.3cm}\forall t\in(-\epsilon,\infty)
\end{equation*}
Then, if $ u(\rho,\omega) \leq v(\rho,\omega)$ when $(\rho,\omega)$ is near to $(\rho_{0},\omega_{0})$, we have that $u=v$ in a neighborhood of $(\rho_{0},\omega_{0})$.  
\end{theorem}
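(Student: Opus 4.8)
The plan is to reduce the statement to a scalar version of the Hopf maximum principle for uniformly elliptic quasilinear equations, by showing that the difference $w = v - u$ satisfies a linear elliptic equation with no zeroth-order term in a neighborhood of $(\rho_0,\omega_0)$. The first step is to write down, explicitly, the second-order PDE satisfied by a vertical graph $t = u(\rho,\omega)$ in $\irf$. Using Lemma \ref{LC} one computes the induced metric, the unit normal $N_u$, the second fundamental form, and hence both $H(N_u)$ and the extrinsic curvature $K_u$; these are rational expressions in $u$, $\nabla u$, $\nabla^2 u$, with coefficients depending smoothly on $(\rho,\omega)$ through $h$ and its derivative. The Weingarten relation $H(N_u) = f(H(N_u)^2 - K_u)$ then becomes a single scalar equation $\mathcal{F}(\rho,\omega,u,\nabla u,\nabla^2 u)=0$, and the same for $v$. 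I would record that $\mathcal{F}$ is $C^1$ in all its arguments, which is where the hypothesis $f \in C^1(-\epsilon,\infty)$ is used, and that the relevant combination $H^2-K = \tfrac14(\kappa_1-\kappa_2)^2$ stays in the domain $(-\epsilon,\infty)$ near $p$ (automatic, since it is nonnegative on the surface, and $f(0)=0$ guarantees consistency of the sign normalization at an umbilic point).

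The second step is the ellipticity computation: I must check that the linearization of $\mathcal{F}$ with respect to the Hessian variables is a positive-definite symmetric matrix at every point, i.e. that $\mathcal{F}$ defines a uniformly elliptic operator along the graphs under consideration. This is precisely where the structural condition $4t(f'(t))^2 < 1$ enters. The standard argument (as in \cite{RS}) is: differentiate $H = f(H^2-K)$ implicitly; writing the principal-curvature form, the quadratic form controlling ellipticity turns out to have eigenvalues proportional to $1 \pm 2\sqrt{t}\,f'(t)$ evaluated at $t = H^2-K$, and $4t(f'(t))^2<1$ is exactly the condition that both factors be positive. Since the base manifold here is just $\mathbb{R}^2$ with a conformal-in-the-fiber warping, the Euclidean computation of \cite{RS} goes through with the coefficients of $h_\rho$ absorbed into lower-order and first-order terms, so ellipticity is inherited directly.

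The third step is the subtraction trick. With $\Sigma_1,\Sigma_2$ both solutions and their tangent planes and unit normals agreeing at $p$, I would apply the mean value theorem along the segment $(1-s)\,(u,\nabla u,\nabla^2 u) + s\,(v,\nabla v,\nabla^2 v)$, $s\in[0,1]$, to express $0 = \mathcal{F}(\cdot,v,\cdot)-\mathcal{F}(\cdot,u,\cdot)$ as $L w := a^{ij}(\rho,\omega)\,\partial_{ij}w + b^i(\rho,\omega)\,\partial_i w + c(\rho,\omega)\,w = 0$, where $a^{ij}$ is uniformly elliptic by Step 2 (for $\Omega$ small enough, by continuity of all data), and $b^i, c$ are bounded. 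Since $w = v-u \ge 0$ near $(\rho_0,\omega_0)$ and $w(\rho_0,\omega_0)=0$ with $\nabla w(\rho_0,\omega_0)=0$ (the tangent planes agree), $w$ attains an interior minimum there; the strong maximum principle (Hopf) for $L$ — valid with a zeroth-order term of arbitrary sign, since the minimum value is $0$ — forces $w \equiv 0$ in a neighborhood, i.e. $u = v$ near $(\rho_0,\omega_0)$.

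The main obstacle I anticipate is Step 2: verifying that the full operator $\mathcal{F}$ coming from $H = f(H^2-K)$ on a graph in the warped space is genuinely uniformly elliptic, and carefully identifying the role of $4t(f'(t))^2<1$ in making the leading symmetric matrix positive-definite rather than merely nonnegative. The graph computations of $H$ and $K$ in Step 1 are lengthy but routine; the subtraction and Hopf argument in Step 3 is standard once Step 2 is secured. One technical point to handle with care is that $f'$ is evaluated at the varying argument $H^2-K$ along the interpolation, so the coefficients $a^{ij}$ depend on the interpolation parameter; uniform ellipticity must be established uniformly in $s\in[0,1]$, which again follows from $4t(f'(t))^2<1$ holding on all of $(-\epsilon,\infty)$ together with compactness.
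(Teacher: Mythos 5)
Your proposal is correct and follows exactly the route the paper itself relies on: the paper states this theorem without proof, remarking only that a local graph of an elliptic Weingarten surface satisfies a uniformly elliptic almost-linear equation so that the Hopf maximum principle applies, which is precisely your Steps 1--3 (graph equation, ellipticity from $4t(f'(t))^2<1$ via the factors $1\pm 2\sqrt{t}\,f'(t)$, then subtraction and the strong minimum principle for $w=v-u\ge 0$ vanishing at an interior point). The only detail worth making fully explicit is that along the linear interpolation of the jets the argument $H^2-K$ fed to $f$ and $f'$ stays nonnegative, hence in $(-\epsilon,\infty)$, because it equals the discriminant $\tfrac{1}{4}(\lambda_1-\lambda_2)^2$ of an endomorphism that is symmetric with respect to the positive-definite interpolated metric, so its eigenvalues are real; with that observation your argument is complete.
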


\noindent Now, we focus on the geometrical boundary maximum principle for elliptic Weingarten Surfaces.
\begin{theorem}[\bf Boundary maximum principle]
Let $\Sigma_{1}$ and $\Sigma_{2}$ be surfaces in $\irf$, which are given by graphs of $C^{2}$ functions $u: \Omega \rightarrow \mathbb{R}$ and  $v: \Omega \rightarrow \mathbb{R}$, here $\Omega \subset \mathbb{R}^{2}$ is a bounded domain and $p=(\rho_0,\omega_0,t_0) \in \Sigma_{1} \cap \Sigma_{2}$ where the tangents planes $T_p\Sigma_{1}$ and $T_p\Sigma_{2}$,  the unit normals $N_1(p)$ and $N_2(p)$, and the tangents lines at boundary $T_p\partial\Sigma_{1}$ and $T_p\partial\Sigma_{2}$ agree. The $C^2$-boundaries  $\partial\Sigma_{1}$ and $\partial\Sigma_{2}$ are given by restrictions of $u$ and $v$ to part of the boundary $\partial\Omega$. Suppose that the surfaces $\Sigma_{i}$, $i=1,2$ are elliptic Weingarten surfaces, that is, there exists a real  function  $f \in C^{1}(-\epsilon,+\infty)$ with $\epsilon >0$ such that 

\begin{equation*}
    H(N_{i}) = f(H_{i}^{2}-K_{e}^i), \, \, \, \, i=1,2,
\end{equation*}
where $H(N_i)$ and $K_e^i$ are the mean curvature function and extrinsic curvature  of $u$ and $v$ with respect to unit normals $N_i$ of $\Sigma_i$, $i=1,2$. Assume that the function $f$ satisfy the elliptic condition 

\begin{equation*}
    4t(f'(t))^{2}<1 \hspace{.3cm}\forall t\in(-\epsilon,\infty)
\end{equation*}
Then, if $ u(\rho,\omega) \leq v(\rho,\omega)$ when $(\rho,\omega)$ is near to $(\rho_{0},\omega_{0})$, we have that $u=v$ in a neighborhood of $(\rho_{0},\omega_{0})$.  
\end{theorem}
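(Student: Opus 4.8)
The plan is to reduce the statement to the classical Hopf boundary point lemma for a uniformly elliptic second order equation. Since $\Sigma_1$ and $\Sigma_2$ are vertical graphs over the same bounded domain $\Omega\subset\R^2$, I would first translate the Weingarten relation into a PDE for the graph function. Using the connection coefficients in Lemma~\ref{LC}, one computes for a graph $z=w(\rho,\omega)$ the mean curvature $H$ and the extrinsic curvature $K$ as explicit functions of $(w,\nabla w,\hess w)$, and substituting into $H=f(H^2-K)$ produces a second order equation
\begin{equation*}
\mathcal{Q}[w]:=H(w,\nabla w,\hess w)-f\left(H^2(w,\nabla w,\hess w)-K(w,\nabla w,\hess w)\right)=0.
\end{equation*}
As recalled before the theorem (and as in \cite{RS}), the hypothesis $4t(f'(t))^2<1$ on $(-\epsilon,\infty)$ is precisely what makes $\mathcal{Q}$ uniformly elliptic along any graph in play: writing the principal symbol $\partial\mathcal{Q}/\partial(\hess w)$ and using the identity $H^2-K=\frac{1}{4}(\kappa_1-\kappa_2)^2\ge 0$, one checks that this symbol is positive definite exactly when $|2\sqrt{t}\,f'(t)|<1$, equivalently when the Weingarten curve $t\mapsto(f(t)-\sqrt t,\ f(t)+\sqrt t)$ in the $(\kappa_2,\kappa_1)$-plane is a strictly monotone graph. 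The warped metric of $\irf$ changes only the explicit coefficients of $\mathcal{Q}$, not this algebraic fact. Granting it, $\mathcal{Q}[u]=\mathcal{Q}[v]=0$ on $\Omega$.

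Now I would compare the two solutions. Set $w:=v-u$; then $w\ge 0$ near $(\rho_0,\omega_0)$ and $w(\rho_0,\omega_0)=0$ because $u(\rho_0,\omega_0)=v(\rho_0,\omega_0)=t_0$. Applying the integral form of the mean value theorem to $s\mapsto\mathcal{Q}[u+sw]$ and using $\mathcal{Q}[u]=\mathcal{Q}[v]=0$ gives
\begin{equation*}
0=\mathcal{Q}[v]-\mathcal{Q}[u]=a^{ij}(\rho,\omega)\,\partial_{ij}w+b^i(\rho,\omega)\,\partial_i w+c(\rho,\omega)\,w,
\end{equation*}
where $a^{ij},b^i,c$ are the averages over $s\in[0,1]$ of the corresponding partial derivatives of $\mathcal{Q}$ along the $2$-jet of $u+sw$. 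Shrinking the neighborhood of $(\rho_0,\omega_0)$ if necessary, these $2$-jets remain in the region where the symbol is positive definite (the $0$- and $1$-jets of $u$ and $v$ coincide at $p$ and the $2$-jets vary continuously), so $(a^{ij})$ is uniformly positive definite with bounded continuous coefficients, and $w$ solves a linear, uniformly elliptic equation $Lw=0$. Furthermore, since the tangent planes $T_p\Sigma_1$ and $T_p\Sigma_2$ agree, $\nabla u$ and $\nabla v$ coincide at $(\rho_0,\omega_0)$, hence $\nabla w(\rho_0,\omega_0)=0$; and since the tangent lines $T_p\partial\Sigma_1$, $T_p\partial\Sigma_2$ agree and $\partial\Sigma_i$ are $C^2$, the relevant arc of $\partial\Omega$ is $C^2$ near $(\rho_0,\omega_0)$, so $\Omega$ satisfies an interior ball condition there.

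Finally I would invoke the strong maximum principle and the Hopf boundary point lemma. Since $w\ge 0$ attains the value $0$ at the boundary point $(\rho_0,\omega_0)$, the zeroth order term $c\,w$ is harmless (one uses the standard version of these principles valid for a vanishing extremal value, replacing $c$ by $c^-$ if one wishes). If $w\not\equiv 0$ near $(\rho_0,\omega_0)$, the strong maximum principle gives $w>0$ in the interior of $\Omega$ near $(\rho_0,\omega_0)$, and then Hopf's lemma at the boundary point $(\rho_0,\omega_0)$, using the interior ball condition, yields $\partial w/\partial\nu(\rho_0,\omega_0)<0$ for the inner unit normal $\nu$ to $\partial\Omega$, contradicting $\nabla w(\rho_0,\omega_0)=0$. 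Hence $w\equiv 0$, that is $u\equiv v$ in a neighborhood of $(\rho_0,\omega_0)$, as asserted. The genuinely delicate step is the first one, namely obtaining the equation $\mathcal{Q}[w]=0$ from Lemma~\ref{LC} and verifying that $4t(f'(t))^2<1$ is exactly the condition for its uniform ellipticity; once this is in hand, the linearization and the application of the Hopf lemmas are standard.
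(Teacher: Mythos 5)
Your argument is correct, and in fact the paper offers no proof of this statement at all: both maximum principles are stated as known facts, implicitly resting on the standard theory of elliptic Weingarten graphs from \cite{RS} and \cite{FM}. What you write is exactly that standard argument --- express $H$ and $K$ of a vertical graph in $\irf$ via Lemma~\ref{LC}, observe that $4t(f'(t))^2<1$ is the uniform ellipticity condition for $\mathcal{Q}[w]=H-f(H^2-K)$, linearize along the segment $u+s(v-u)$, and apply the strong maximum principle together with Hopf's boundary point lemma to $w=v-u$. Two small remarks. First, your interpolation step silently needs the $2$-jets $u+sw$ to stay in the domain of $f$; this is automatic because $H^2-K=\tfrac14(\kappa_1-\kappa_2)^2\ge 0$ for the shape operator of \emph{any} graph $2$-jet, and $f$ is defined on $(-\epsilon,\infty)\supset[0,\infty)$ --- worth saying explicitly. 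Second, there is a sign slip in the conclusion: with $w>0$ in the interior and $w(p)=0$ at the boundary point, Hopf's lemma gives $\partial w/\partial\nu(p)>0$ for the \emph{inner} normal $\nu$ (equivalently, a strictly negative outward normal derivative); the contradiction with $\nabla w(p)=0$ is unaffected. A further simplification available here: since the warping function $h$ depends only on $\rho$, vertical translation is an isometry of $\irf$, so $\mathcal{Q}$ does not depend on $w$ itself and the zeroth-order coefficient $c$ in your linearized operator actually vanishes, making the appeal to the ``$c^-$'' refinement unnecessary.
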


\begin{remark}
By using the geometrical maximum principle for elliptic Weingarten surfaces, for standard arguments, we conclude that $\Sigma_1=\Sigma_2$.     
\end{remark}
\noindent As an immediate consequence, we show that the only  elliptic Weingarten surfaces immersed in $\irf$ having an extreme value in the height function must be slices.

\begin{corollary}[\bf Barriers]
\label{barriers}
There is no elliptic Weingarten Surfaces $\Sigma_\gamma$, rotationally-invariant with respect to the $t$-axis, whose generating curve $\gamma$ is the graph of a $C^2$ function $t=t(\rho)$ having a local maximum or a local minimum.
    
\end{corollary}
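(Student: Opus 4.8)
The plan is to argue by contradiction and to use a horizontal slice as a comparison surface for the geometrical interior maximum principle. Suppose such a surface $\Sigma_\gamma$ exists and that its generating curve $\gamma$, written as the graph of a $C^2$ function $t=t(\rho)$, attains an interior local maximum at some $\rho=\rho_0$ (the local minimum case being entirely analogous, e.g.\ after reflecting the configuration in the slice, which is an isometry of $\irf$). Set $t_0=t(\rho_0)$, pick any $\omega_0$, and let $p=(\rho_0,\omega_0,t_0)\in\Sigma_\gamma$. The idea is that near $p$ the surface $\Sigma_\gamma$ lies on one side of the slice $S(t_0)=\{t=t_0\}$, is tangent to it at $p$, and both are elliptic Weingarten surfaces of minimal type; the interior maximum principle then forces them to coincide near $p$, and the standard propagation argument forces $\Sigma_\gamma=S(t_0)$ globally, contradicting that $\gamma$ is a non-constant curve with a genuine extremum.

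In detail, I would carry out the following steps. First, since $\rho_0$ is an interior local maximum of $t$ (and, if $\rho_0=0$, since smoothness of $\Sigma_\gamma$ across the axis already forces $t'(0)=0$), we have $t'(\rho_0)=0$, so $T_p\Sigma_\gamma$ is horizontal and agrees with $T_pS(t_0)$. Second, by the facts collected in Section~\ref{preliminares}, $S(t_0)$ is totally geodesic in $\irf$, hence its principal curvatures vanish and $H\equiv K\equiv 0$ on it; since we are in the minimal-type setting, $f(0)=0$, so $S(t_0)$ satisfies $H=f(H^2-K)$ for the same elliptic function $f$, i.e.\ it is an EWMT--surface. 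Third, in a coordinate chart around $p$ --- polar coordinates $(\rho,\omega)$ on a small rectangle $\Omega$ about $(\rho_0,\omega_0)$ if $\rho_0>0$, Cartesian coordinates on a small disk about the origin if $\rho_0=0$ --- the surface $\Sigma_\gamma$ is the vertical graph of $u(\rho,\omega)=t(\rho)$ and $S(t_0)$ is the vertical graph of $v\equiv t_0$, with $u\le v$ near $(\rho_0,\omega_0)$; orient $\Sigma_\gamma$ so that $N(p)=e^{-h(\rho_0)}\partial_t$, the (vertical) unit normal of the slice. Fourth, apply the Interior Maximum Principle to conclude $u\equiv v$ in a neighborhood of $(\rho_0,\omega_0)$, i.e.\ $\Sigma_\gamma$ coincides with $S(t_0)$ near $p$. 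Finally, invoke the remark following the boundary maximum principle (connectedness together with unique continuation) to obtain $\Sigma_\gamma=S(t_0)$; then $\gamma$ is the horizontal line $t\equiv t_0$, which has no proper local extremum --- a contradiction.

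There is essentially nothing to grind through computationally here; the points that require a little care, and which I would check explicitly, are: (i) that $S(t_0)$ is a legitimate comparison surface for the maximum principle, which is exactly where $f(0)=0$ is genuinely used, and hence the reason the statement belongs to the minimal-type theory (for $f(0)\neq 0$ a totally geodesic slice no longer satisfies the Weingarten relation, and indeed the conclusion fails); (ii) the local vertical-graph representation of $\Sigma_\gamma$ near $p$, including the degenerate case $\rho_0=0$ where polar coordinates break down and one passes to Cartesian coordinates; and (iii) the agreement of the unit normals at $p$, which is arranged simply by the choice of orientation of $\Sigma_\gamma$. Granting the interior maximum principle as stated, no further obstacle remains: the passage from local to global coincidence is the standard argument already cited in the remark, and the local minimum case is obtained by symmetry.
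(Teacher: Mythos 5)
Your proposal is correct and follows essentially the same route as the paper: take the totally geodesic slice $S(t_0)$ through the extremum (an EWMT-surface precisely because $f(0)=0$), note the tangency, and apply the interior maximum principle to force local, hence global, coincidence with the slice --- a contradiction. Your write-up is in fact more careful than the paper's, spelling out the graph representations, the normal orientation, and the degenerate case $\rho_0=0$ on the axis.
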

\begin{proof}
The slices $S(t_0)=\R^2\times\{t_0\}$ for some $t_0\in\R$ are totally geodesic surfaces, consequently, EWMT-surfaces. Moreover, $\{S(t_0);t_0\in\R\}$ form a foliation of the warped product $\irf$. If we suppose that such surface $\Sigma_\gamma$ exists then there exists $t_0\in\R$  such that $S(t_0)$ and $\Sigma_\gamma$ satisfies the condition of the geometrical maximum principle. Consequently $\Sigma_\gamma$ have an open part of $S(t_0)$, a contradiction. 
\end{proof}

\noindent This important corollary indicates that we shall need to research by generating curves $\gamma$ which are (locally) graphs of the form $\rho=\rho(t)$.

\subsection{Algebraic consequences of ellipticity}
For the minimal surface case of a surface $\Sigma$ immersed into the Euclidean space $\R^3$, it is well known that its principal curvatures $\kappa_1$ and $\kappa_2$ either have opposite sign at a fixed point $p$ or both vanishes at $p$. Now, we  consider the following (well-known) lemma (see \cite{ST} or \cite{FM}), which establish the same behavior for the principal curvatures of a EWMT-surface immersed into $\irf$.
\begin{lemma}{\cite{FM}}\label{l1}
Let  $f \in C^{1}(-\epsilon,+\infty)$ with $\epsilon >0$ be a function. Then, following statements are equivalent:
\begin{enumerate}
\item $f$ is elliptic and $f(0)=0$.
\item $g(x)=x-f(x^2)$ is a strictly increasing function satisfying $g(0)=0$ and $g^\prime(x)\neq0$ for all $x$.
\item $\overline{g}(x)=x+f(x^2)$ is a strictly increasing function satisfying $g(0)=0$ and $g^\prime(x)\neq0$ for all $x$.
\end{enumerate}
\end{lemma}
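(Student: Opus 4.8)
The plan is to prove the chain of equivalences $(1)\Leftrightarrow(2)$ and $(1)\Leftrightarrow(3)$ directly by computing derivatives, since the elliptic condition $4t(f'(t))^2<1$ is itself a statement about a derivative, and the monotonicity of $g$ and $\overline g$ is governed by $g'$ and $\overline g'$.

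First I would set up $(1)\Rightarrow(2)$. Given $g(x)=x-f(x^2)$, I compute $g'(x)=1-2xf'(x^2)$. The goal is to show $g'(x)>0$ for all $x$ (which gives strict monotonicity and $g'\neq0$), and $g(0)=0-f(0)=0$ is immediate from $f(0)=0$. For $x\neq0$, write $t=x^2>0$, so $2xf'(x^2)=2xf'(t)$, and $(2xf'(t))^2=4x^2(f'(t))^2=4t(f'(t))^2<1$ by \eqref{ew1}; hence $|2xf'(t)|<1$, so $1-2xf'(x^2)>0$. For $x=0$, $g'(0)=1>0$. Thus $g'>0$ everywhere, and $(2)$ follows. (Note: the condition $4t(f'(t))^2<1$ is only assumed on $(-\epsilon,\infty)$, but since $t=x^2\ge0$ we only ever evaluate it at nonnegative $t$, so this is fine.)

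Next, $(2)\Rightarrow(1)$. Assume $g(x)=x-f(x^2)$ is strictly increasing with $g'(x)\neq0$ for all $x$; by continuity of $g'$ and $g'(0)=1>0$ this forces $g'(x)>0$ for all $x$, i.e. $1-2xf'(x^2)>0$ for all $x\in\R$. Also $g(0)=0$ gives $-f(0)=0$, so $f(0)=0$. To recover ellipticity, fix $t>0$ and set $x=\sqrt t$: then $1-2\sqrt t\,f'(t)>0$; choosing instead $x=-\sqrt t$ gives $1+2\sqrt t\, f'(t)>0$. Multiplying these two inequalities yields $1-4t(f'(t))^2>0$, i.e. $4t(f'(t))^2<1$ for $t>0$. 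For $t\in(-\epsilon,0]$ the left side $4t(f'(t))^2\le 0<1$ trivially, so the elliptic condition holds on all of $(-\epsilon,\infty)$. Hence $(1)$.

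Finally, $(1)\Leftrightarrow(3)$ is entirely analogous with $\overline g(x)=x+f(x^2)$, $\overline g'(x)=1+2xf'(x^2)$, and the same substitution $x=\pm\sqrt t$; alternatively one observes $\overline g(x)=-g(-x)$, so $\overline g$ is strictly increasing iff $g$ is, which reduces $(3)$ to $(2)$ immediately and saves repeating the argument. I expect no serious obstacle here — the only subtle points are (a) handling $x=0$ separately so that the strict positivity of $g'$ is uniform, and (b) noting that the $\pm\sqrt t$ trick is exactly what converts the one-sided inequality $1-2xf'(x^2)>0$ into the squared elliptic inequality, which is the crux of the equivalence. I would also remark that the statement of item (3) as written ("satisfying $g(0)=0$") presumably means $\overline g(0)=0$, which again is immediate from $f(0)=0$.
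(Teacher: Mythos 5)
Your proof is correct, and it is the standard argument: compute $g'(x)=1-2xf'(x^2)$, use the substitution $t=x^2$ with both signs $x=\pm\sqrt t$ to pass between the one-sided inequality $1\mp 2\sqrt t\,f'(t)>0$ and the squared elliptic condition $4t(f'(t))^2<1$, and reduce (3) to (2) via $\overline g(x)=-g(-x)$. The paper itself gives no proof (it cites \cite{FM}), so there is nothing to contrast with; your treatment of the trivial range $t\in(-\epsilon,0]$ and of the point $x=0$ covers the only places where care is needed.
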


\begin{remark}
\label{rm1}
For an elliptic function  $f \in C^{1}((-\epsilon,+\infty))$ with $\epsilon >0$, such that $f(0)=0$, there exists the following limits:
\begin{equation*}
l=\lim_{r\to-\infty}(r-f(r^2)) \hspace{.3cm} \textnormal{and} \hspace{.3cm} L=\lim_{r\to+\infty}(r-f(r^2))
\end{equation*}
where $l\in[-\infty,0)$ and $L\in(0+\infty]$.
\end{remark}  

\noindent As a consequence from this lemma, we have the desired behaviour for the principal curvatures.
\begin{corollary}\label{c1}
Let $\Sigma\looparrowright\irf$ be a EWMT-surface immersed into the warped product $\irf$. Then, its extrinsic curvature function $K$ is a non-positive function, we mean $K(p)\le0$ for all $p\in\Sigma$. Moreover, $K(p)=0$ if, and only if, both principal curvatures $\kappa_1(p)$ and $\kappa_2(p)$ vanish identically at $p$.
\end{corollary}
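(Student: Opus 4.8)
The plan is to reduce everything to the algebraic characterization provided by Lemma \ref{l1}. Fix a point $p \in \Sigma$ and let $\kappa_1 = \kappa_1(p)$, $\kappa_2 = \kappa_2(p)$ be the principal curvatures at $p$. Then $2H = \kappa_1 + \kappa_2$ and $K = \kappa_1\kappa_2$, so $H^2 - K = \left(\frac{\kappa_1+\kappa_2}{2}\right)^2 - \kappa_1\kappa_2 = \left(\frac{\kappa_1 - \kappa_2}{2}\right)^2 \ge 0$. Thus $H^2 - K$ is always a nonnegative real number; in particular the argument $t = H^2 - K$ of $f$ lies in $[0,\infty) \subset (-\epsilon,\infty)$, so the Weingarten equation $H = f(H^2-K)$ makes sense at every point. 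Write $r = \frac{\kappa_1 - \kappa_2}{2}$ and $s = \frac{\kappa_1+\kappa_2}{2} = H$, so that $H^2 - K = r^2$ and the EWMT-equation becomes $s = f(r^2)$, i.e. $s - f(r^2) = 0$.

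Next I would invoke Lemma \ref{l1}: since $f$ is elliptic with $f(0)=0$, the function $g(x) = x - f(x^2)$ is strictly increasing with $g(0)=0$. I claim $\kappa_1 \kappa_2 \le 0$. Indeed, observe that $\kappa_1 = s + r$ and, up to relabeling the principal curvatures, we may assume $r \ge 0$, so $\kappa_1 = s + r \ge s - r = \kappa_2$ trivially — more usefully, note $g(r) = r - f(r^2) = r - s = \tfrac{\kappa_1-\kappa_2}{2} - \tfrac{\kappa_1+\kappa_2}{2} = -\kappa_2$ and, using $\overline{g}(x) = x + f(x^2)$ from part (3) of the lemma, $\overline{g}(r) = r + f(r^2) = r + s = \kappa_1$. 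Hence $\kappa_1 = \overline{g}(r)$ and $\kappa_2 = -g(r)$. Since both $g$ and $\overline{g}$ are strictly increasing with $g(0) = \overline{g}(0) = 0$, we get that for $r \ge 0$: $\overline{g}(r) \ge 0$ and $g(r) \ge 0$, hence $\kappa_1 \ge 0$ and $\kappa_2 \le 0$, so $K = \kappa_1\kappa_2 \le 0$. (If instead one chooses the other sign of $r$, the roles of $\kappa_1$ and $\kappa_2$ are swapped, giving the same conclusion $K \le 0$.)

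For the equality case: $K(p) = 0$ means $\kappa_1\kappa_2 = 0$, i.e. $\overline{g}(r) \cdot g(r) = 0$, so $g(r) = 0$ or $\overline{g}(r) = 0$ (with $r \ge 0$). Since $g$ and $\overline{g}$ are strictly increasing and vanish only at $0$, either condition forces $r = 0$, i.e. $\kappa_1 = \kappa_2$. But then $\kappa_1 = \overline{g}(0) = 0$ and $\kappa_2 = -g(0) = 0$, so both principal curvatures vanish at $p$. Conversely, if $\kappa_1(p) = \kappa_2(p) = 0$ then $K(p) = 0$ trivially. This gives the stated biconditional.

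The only subtle point — and the one I would be most careful about — is the sign bookkeeping: the labeling of $\kappa_1, \kappa_2$ is arbitrary, so one must check that the conclusion $K \le 0$ (and the equality analysis) is symmetric under swapping them, which amounts to noting that replacing $r$ by $-r$ interchanges $\overline{g}(r) \leftrightarrow -g(-r)$ and $-g(r) \leftrightarrow \overline{g}(-r)$ because $g$ is odd-at-zero in the relevant sense (more precisely $g$ and $\overline{g}$ are each strictly monotone through the origin, so $g(-r) \le 0 \le \overline{g}(-r)$ for $r\ge 0$). No heavy computation is needed; the entire argument is the identity $H^2 - K = \left(\frac{\kappa_1-\kappa_2}{2}\right)^2$ combined with the monotonicity statements of Lemma \ref{l1}.
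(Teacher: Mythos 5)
Your proof is correct, but it takes a slightly different route from the paper for the first assertion. The paper first integrates the ellipticity condition $4t(f'(t))^2<1$ together with $f(0)=0$ to obtain the pointwise bound $f(t)^2\le t$, whence $H^2=f(H^2-K)^2\le H^2-K$ and so $K\le 0$; only for the equality case does it appeal to Lemma \ref{l1}. You instead run everything through Lemma \ref{l1}: writing $r=\tfrac{\kappa_1-\kappa_2}{2}$ you identify $\kappa_1=\overline{g}(r)$ and $\kappa_2=-g(r)$, and the strict monotonicity of $g$ and $\overline{g}$ through the origin gives $\kappa_1\ge 0\ge\kappa_2$ (after the harmless normalization $r\ge0$), hence $K\le0$, with the equality case falling out of the same identities since $g$ and $\overline{g}$ vanish only at $0$. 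The two arguments are equivalent in substance --- Lemma \ref{l1} is itself a repackaging of the integrated ellipticity bound --- but yours is more unified, delivering the inequality and the rigidity statement in one stroke, and it makes explicit the sign separation of the principal curvatures that the paper only gestures at; the paper's version has the minor advantage of not needing the relabeling/sign bookkeeping you rightly flag at the end. Your identities $g(r)=-\kappa_2$, $\overline{g}(r)=\kappa_1$ and $H^2-K=\bigl(\tfrac{\kappa_1-\kappa_2}{2}\bigr)^2$ all check out, so there is no gap.
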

\begin{proof}

By integrating \eqref{e2} from $0$ to $t$, we obtain $f^2(t)\le t$ for all $t\in (-\epsilon,\infty)$. Then, from equation \eqref{e1}, we get $H(p)\le(\sqrt{H^2-K})(p)$ and finally $K(p)\le0$. Moreover, the Lemma \ref{l1} gives the desired behaviour for the principal curvatures, which conclude the prove.
\end{proof}

\section{ Rotationally-invariant EWMT-surfaces in $\irf$ .}\label{eu}

In the last section, we enunciated a maximum principle for elliptic Weingarten surfaces immersed in $\irf$. As consequence of that, we defined barriers to existence of these surfaces defined by graphs. In this section, we show that  under suitable conditions, it is possible to find rotationally-invariant EWMT-surfaces immersed into $\irf$ and we deduce geometrical properties of these surfaces. We finalize this section with some examples of EWMT-surfaces immersed in $\irf$ by particular cases of the warping function $h$.

 %In order to shows the existence of a family of $f$-surfaces into $\irf$ 
 
 \subsection{Existence and uniqueness theorem by EWMT surfaces in $\irf$}
 
 Let  $\rho:(a,b)\to(0+\infty)$, be a strictly positive-function and of $C^2$-class, defined on the open interval $(a,b)$. Let $\gamma^\rho$ be the graph of the function $\rho(t)$ and let $\Sigma_{\gamma^\rho}$ be the surface invariant by rotations around the $t$-axis in the space $\irf$ obtained from the curve $\gamma^\rho$. In this case the surface  $\Sigma_{\gamma^\rho}$  admit the global parametrization $\psi(t,\omega)=(\rho(t),\omega,t)$. Let $H=H(N)$ denotes the mean curvature function of $\Sigma$, here $N$ is the unit, normal,  vector  field  which point in the opposite direction of the revolution axis. Standards computations shows that, in this conditions, $\Sigma_{\gamma^\rho}$ is an EWMT-surface if, and only if, the function $\rho=\rho(t)$ verifies the equation: 
 %for the surface $\Sigma_{\gamma^\rho}$ can be written as 

\begin{eqnarray}
	\label{ge1}
	\dfrac{e^h(e^{2h}h_\rho+2h_\rho\rho_t^2-\rho_{tt})}{2(e^{2h}+\rho_t^2)^{\frac{3}{2}}}+ \dfrac{1}{2\rho}\dfrac{e^h}{(e^{2h} + \rho_t^2)^\frac{1}{2}} =f\left(\left[\dfrac{e^h(e^{2h}h_\rho+2h_\rho\rho_t^2-\rho_{tt})}{2(e^{2h}+\rho_t^2)^{\frac{3}{2}}}- \dfrac{1}{2\rho}\dfrac{e^h}{(e^{2h} + \rho_t^2)^\frac{1}{2}} \right]^2 \right),
\end{eqnarray}
where $\rho_t$ and $\rho_{tt}$ denote the first and second derivative of the function $\rho=\rho(t)$ with respect to the variable $t$. By adjusting the arguments presented in \cite{ST} and \cite{FM}, we can show the existence and uniqueness of rotationally-invariant EWMT-surfaces in $\irf$.

\begin{theorem}[Existence and unicity theorem] 
\label{EUT}
Let $\irf$ be a warped product, where $h=h(\rho)$ is a smooth fixed, warping function defined on $\R$. Let $\rho_0>0$ be a positive number, verifying
\begin{equation*}
h_\rho(\rho_0)\cdot\rho_0 \geq 1.
\end{equation*}
Then, there exists only one EWMT-surface $\Sigma_{\rho_0}$ having mean curvature function $H=H(N)$, whose generating curve is the graph of a smooth function $\rho=\rho(t)$ which is defined (locally) on an open domain $I\subset \R$ containing the $0$, the graph of the function $\rho=\rho(t)$ has a local minimal point at $t=0$. More precisely, the function $\rho(t)$ verifies
\begin{equation*}
\rho(t)>0, \hspace{.2cm} \rho(0)=\rho_0, \hspace{.2cm} \rho_t(0)=0.
\end{equation*}  
\end{theorem}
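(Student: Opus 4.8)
The plan is to recast equation \eqref{ge1} as a second order ODE in normal form $\rho_{tt}=F(\rho,\rho_t)$, to identify from the equation at $t=0$ the unique value of $\rho_{tt}(0)$ compatible with $\rho(0)=\rho_0$, $\rho_t(0)=0$, and then to invoke Picard--Lindel\"of. Write the two bracketed expressions in \eqref{ge1} as
\[
A=A(\rho,\rho_t,\rho_{tt})=\frac{e^{h}\bigl(e^{2h}h_\rho+2h_\rho\rho_t^{2}-\rho_{tt}\bigr)}{2(e^{2h}+\rho_t^{2})^{3/2}},\qquad B=B(\rho,\rho_t)=\frac{e^{h}}{2\rho\,(e^{2h}+\rho_t^{2})^{1/2}}>0 ,
\]
so that along $\Sigma_{\gamma^{\rho}}$ the mean curvature is $H=A+B$ and the argument of $f$ in \eqref{ge1} equals $(A-B)^{2}=H^{2}-K$ (hence $K=4AB$). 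Thus \eqref{ge1} is the scalar relation $\Phi(\rho,\rho_t,\rho_{tt})=0$ with $\Phi:=A+B-f\bigl((A-B)^{2}\bigr)$. Since $A$ is affine in $\rho_{tt}$ with $\partial_{\rho_{tt}}A=-e^{h}/\bigl(2(e^{2h}+\rho_t^{2})^{3/2}\bigr)\neq0$ while $B$ does not involve $\rho_{tt}$, one computes
\[
\partial_{\rho_{tt}}\Phi=\partial_{\rho_{tt}}A\cdot\Bigl(1-2(A-B)f'\bigl((A-B)^{2}\bigr)\Bigr)=\partial_{\rho_{tt}}A\cdot g'(A-B),\qquad g(x)=x-f(x^{2}).
\]
By Lemma \ref{l1}, ellipticity of $f$ together with $f(0)=0$ is equivalent to $g$ being strictly increasing with $g'(x)\neq0$ for every $x$, whence $\partial_{\rho_{tt}}\Phi\neq0$ everywhere. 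Equivalently, adding $B-A$ to both sides of \eqref{ge1} converts it into $\overline g(B-A)=2B$ with $\overline g(x)=x+f(x^{2})$ the strictly increasing $C^{1}$ map of Lemma \ref{l1}; inverting gives $B-A=\overline g^{-1}(2B)$, and solving for $\rho_{tt}$ exhibits \eqref{ge1} as an autonomous ODE $\rho_{tt}=F(\rho,\rho_t)$ with $F$ of class $C^{1}$ on the set $\{\rho>0\}$ where $2B$ belongs to the image of $\overline g$.

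Next I would pin down the Cauchy data. Imposing $\rho(0)=\rho_0$, $\rho_t(0)=0$ one gets $B_0:=B(\rho_0,0)=\tfrac1{2\rho_0}$ and $A_0:=A(\rho_0,0,\rho_{tt}(0))=\tfrac12 h_\rho(\rho_0)-\rho_{tt}(0)/\bigl(2e^{2h(\rho_0)}\bigr)$, so \eqref{ge1} at $t=0$ becomes $g(A_0-B_0)=-1/\rho_0$. Because $g$ is a strictly increasing bijection of $\R$ onto an interval $(l,L)$ with $l<0<L$ (Remark \ref{rm1}), the value $-1/\rho_0$ has (when it lies in $(l,L)$, which holds for all $\rho_0>0$ as soon as $l=-\infty$) a unique preimage, i.e. there is a unique admissible $c:=\rho_{tt}(0)$. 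Moreover, by Corollary \ref{c1} the extrinsic curvature satisfies $K\le0$ along $\Sigma_{\gamma^{\rho}}$, so $4A_0B_0=K_0\le0$, and since $B_0>0$ this forces $A_0\le0$; therefore
\[
c=e^{2h(\rho_0)}\bigl(h_\rho(\rho_0)-2A_0\bigr)\ \ge\ e^{2h(\rho_0)}\,h_\rho(\rho_0).
\]
This is exactly where the hypothesis $h_\rho(\rho_0)\,\rho_0\ge1$ is used: it guarantees $h_\rho(\rho_0)>0$, hence $c>0$, so the generating curve automatically has a strict local minimum at $t=0$.

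It then remains to apply the Picard--Lindel\"of theorem to the autonomous system $\rho_{tt}=F(\rho,\rho_t)$, $F\in C^{1}$ near $(\rho_0,0)$, with Cauchy data $\rho(0)=\rho_0>0$, $\rho_t(0)=0$ (for which necessarily $\rho_{tt}(0)=F(\rho_0,0)=c$): this yields a unique maximal solution $\rho=\rho(t)$ on an open interval $I\ni0$, of class $C^{2}$ (and smooth when $f$ is), with $\rho>0$ and $\rho\ge\rho_0$ near $0$. The rotationally-invariant surface $\Sigma_{\rho_0}$ generated by $\gamma^{\rho}=\{\rho=\rho(t)\}$ is then an EWMT-surface with the asserted properties. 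For uniqueness, any rotationally-invariant EWMT-surface whose generating curve is a graph $\rho=\rho(t)$ with $\rho(0)=\rho_0$, $\rho_t(0)=0$ must verify \eqref{ge1}, hence has $\rho_{tt}(0)=c$ (the unique root above), and therefore coincides with $\Sigma_{\rho_0}$ by the uniqueness part of Picard--Lindel\"of.

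The one genuinely non-routine point is the normal-form reduction in the first step: recognizing that the ellipticity inequality $4t(f'(t))^{2}<1$ is precisely what makes $g'$ — equivalently $\partial_{\rho_{tt}}\Phi$ — never vanish (so that \eqref{ge1} can be solved for $\rho_{tt}$ and the implicit function theorem applies), and that $f(0)=0$ forces $A_0\le0$, reducing the sign of $\rho_{tt}(0)$ to that of $h_\rho(\rho_0)$; both inputs are furnished by Lemma \ref{l1} and Corollary \ref{c1}. The remaining technical care is to keep $2B$ inside the image of $\overline g$ along the solution — equivalently to ask that $1/\rho_0$ lie below $\overline L=\lim_{x\to+\infty}\overline g(x)\in(0,\infty]$, which is automatic when $\overline L=\infty$ — and, since this is only a local statement, the existence interval $I$ may have to be shrunk accordingly.
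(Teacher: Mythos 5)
The paper itself contains no proof of Theorem \ref{EUT}; it only asserts that the result follows ``by adjusting the arguments presented in \cite{ST} and \cite{FM}''. Your proposal supplies precisely that standard argument, and the computations check out: writing \eqref{ge1} as $A+B=f\bigl((A-B)^2\bigr)$ with $A=\kappa_1/2$, $B=\kappa_2/2$, ellipticity gives $\partial_{\rho_{tt}}\Phi=\partial_{\rho_{tt}}A\cdot g'(A-B)\neq0$ (Lemma \ref{l1}), so the relation is a normal-form ODE $\rho_{tt}=F(\rho,\rho_t)$ with $F\in C^1$; the initial data reduce the equation at $t=0$ to $g(A_0-B_0)=-1/\rho_0$, whose unique root determines $\rho_{tt}(0)$; the inequality $f(t)^2\le t$ (equivalently $K\le0$, Corollary \ref{c1}) forces $A_0\le0$ and hence $\rho_{tt}(0)\ge e^{2h(\rho_0)}h_\rho(\rho_0)>0$; Picard--Lindel\"of then gives existence and uniqueness. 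This is the right route and, as far as one can tell, the intended one.

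Two points need to be sharpened. First, the solvability of $g(x)=-1/\rho_0$ requires $-1/\rho_0>l$, where $l=\lim_{r\to-\infty}(r-f(r^2))\in[-\infty,0)$ as in Remark \ref{rm1}, and $l$ can be finite: for the admissible elliptic function $f(t)=1-\sqrt{t+1}$ one has $g(x)=x-1+\sqrt{x^2+1}\to-1$ as $x\to-\infty$, so no neck of radius $\rho_0\le1$ exists for any choice of $h$. You flag this issue, but you cannot dismiss it by shrinking the interval $I$: the obstruction sits at $t=0$ itself, and it does not follow from the stated hypothesis $h_\rho(\rho_0)\rho_0\ge1$, because $h_\rho$ is absorbed into the free unknown $\rho_{tt}(0)$ and cancels out of the solvability equation. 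Your proof is therefore complete only under the additional assumption $-1/\rho_0>l$ (automatic when $l=-\infty$), which should be stated as a hypothesis rather than a ``technical care''; the analogous hypotheses do appear explicitly in \cite{ST} and \cite{FM}, and their absence here is arguably a gap in the theorem statement itself. Second, and relatedly, your argument uses $h_\rho(\rho_0)\rho_0\ge1$ only through the weaker consequence $h_\rho(\rho_0)>0$; it is worth saying so explicitly, since either the hypothesis is stronger than needed for this local statement or it is meant to stand in for the missing range condition, which, as the computation above shows, it cannot do.
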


\begin{remark}
Notice that Theorem \ref{EUT}, give us the existence and uniqueness of a one-parameter family (depending of the parameter $\rho_0$) of rotationally-invariant EWMT-surfaces into the warped product $\irf$, whose generating curve is (locally) a graph of a function $\rho=\rho(t)$ and having a (local) minimal point.
\end{remark}

\begin{remark}
The condition over the warping function $h(\rho)$ given in Theorem \ref{EUT}, indicates that the derivative $h_\rho$ is (locally) positive around the point $\rho_0$.
\end{remark}

\subsection{Geometric behaviour of  invariant rotationally-invariant EWMT-surfaces in $\irf$}

%\section{$f_0$-surfaces invariant by rotational isometries in \texorpdfstring{$\irf$}{TEXT}}\label{examples}

 \noindent The Theorem \ref{EUT} shows the existence of a family of EWMT-surfaces immersed into the warped product $\irf$ which are rotationally-invariant and whose generating curve is locally the graph of a positive function $\rho(t)$ having a minimum at $t=0$. Now, we consider the generating curve $\gamma(s)$ ($s$ is the arc-length parameter) of such rotationally-invariant EWMT-surface. The goal of this subsection is give the geometric behaviour of the curve $\gamma(s)$ under some assumptions about the warping function $h$. 
 %when the warping function $f(t)$ is admissibl. 
\begin{remark}
 Remind that, the height function of the surface $\Sigma\looparrowright\irf$ is the restriction of the projection
\begin{equation*}
\pi_1:\irf\to \R \hspace{.3cm} \textnormal{given by} \hspace{.3cm} \pi_1(t,p)=t
\end{equation*}
to $\Sigma$. That is, the height function of $\Sigma$ is $\pi_1 \vert_\Sigma$.
\end{remark}

%\subsection{Rotational-invariant surfaces }\label{secex1}
\noindent In order to continue the study of the EWMT-surfaces invariant by rotational movements, now we  consider the smooth curve $\gamma(s)$ parametrized by arclength parameter $s$, lying in the $\rho t$-plane in $\irf$ which is defined by $\Pi=\{(\rho,0,t)\in\irf; \, \omega=0\}$. Thus,
\begin{linenomath*}
$$\gamma(s)=(\rho(s),0,t(s)) \hspace{.5cm} \textnormal{and} \hspace{.5cm} e^{2h(s)}t_s(s)^2+\rho_s(s)^2=1,$$
\end{linenomath*}
where $h(s)=h\vert_{\gamma(s)}$. Therefore, the surface $\Sigma_\gamma$ obtained by rotating the curve $\gamma$ around the $t$-axis can be parameterized by (here $J$ is an open interval)
\begin{linenomath*}
 \begin{equation}
\label{arcapars}
    \psi(s,\omega)=(\rho(s),\omega,t(s)), \hspace{.3cm} s\in J\subset\R,\hspace{.1cm} \omega\in(0,2\pi), \hspace{.1cm} \textnormal{with} \hspace{.1cm} \rho(s)>0.
\end{equation}
\end{linenomath*}
A standard computation give us that the principal curvatures $\kappa_1(s,\omega)=\kappa_1(s)$ and $\kappa_2(s,\omega)=\kappa_2(s)$ of the surface $\Sigma_\gamma$ with respect to the unit normal vector field 
\begin{equation}\label{e6}
N(s,\omega)=\left(-e^ht_s ,0,e^{-h} \rho_s\right)\in T_{\psi(s,\omega)}\irf,
\end{equation}
are given by

\begin{eqnarray}\label{k1k2}
\nonumber\kappa_1(s) & = & e^h\left( [t_{ss}\rho_s-t_s\rho_{ss}] + h_\rho t_s(1+\rho_s^2)\right)(s) \\
 &  & \\
\nonumber\kappa_2(s) & = & \left( \dfrac{e^ht_s}{\rho} \right)(s),
\end{eqnarray}
%\begin{remark}\label{r1}
%Remind that, the slice\footnote[4]{That is, the surfaces of equation $\{t=t_0\}\subset\irf$.} $S_{t_0}$, $t_0\in \R$, is a totally umbilical surface for a non-constant warping function $f.$ And, its principal curvatures are given by
%\begin{equation*}
%\kappa_1(t,x,y)=\kappa_2(t,x,y)=-e^{f(t)}f_t(t).
%\end{equation*}
%\end{remark}

%\subsection{Geometry of the curve $\gamma(s)$}
\noindent where $t_{s}$, $t_{ss}$, $\rho_{s}$ and $\rho_{ss}$ denote the first and  second derivatives of the functions $t(s)$ and $\rho(s)$ respect to $s$. We are supposing that the generating curve $\gamma(s)$ is locally the graph of a function $\rho(t)$, having local minimum at $t=0$, this implies that there exists $s_0\in J$ such that
\begin{equation*}
\dfrac{d\rho}{dt}(0)=0=\dfrac{\rho_s(s_0)}{t_s(s_0)}, \hspace{.5cm} \textnormal{with} \hspace{.5cm} t_s(s_0)\neq 0.
\end{equation*}
from continuity, the function $t_s(s)$ is different from zero in a small interval around $s_0$. The next lemma shows that, actually, this happen for all $s\in J$ when the function warping $h$ is not constant. Recall that, in order to simplify the notations and computations, we omit the variable $s$ from equations  when there is no confusion.

%It is well known that the height function of a $f_0$-surface in the product space $\R^2\times\R$ cannot have a local maximum nor a local minimum unless it is a constant function $t=t_0$, for some $t_0\in \R$. The following lemma shows that the same geometric behaviour hold when the warping function $f$ is not a constant.

\begin{lemma}\label{l2}
Let $\Sigma_\gamma\looparrowright\irf$ be a rotationally-invariant EWMT-surface immersed into the warped product $\irf$ defined by equation \eqref{arcapars}. Assume that the warping function $h$ is not constant. Then, if $\Sigma_\gamma$ is not a horizontal slice, the height function restrict to $\Sigma_\gamma$ has derivative $t_s=t_s(s)$ which never vanish at an interior point of $\Sigma_\gamma$.
\end{lemma}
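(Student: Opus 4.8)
The plan is to argue by contradiction, assuming that the smooth function $t_s(s)$ vanishes at some interior point $s_1 \in J$; we already know from the hypothesis (local minimum of $\rho(t)$ at $t=0$) that $t_s(s_0) \neq 0$ for some $s_0 \in J$. First I would record the two basic constraints: the arclength relation $e^{2h}t_s^2 + \rho_s^2 = 1$, which forces $|\rho_s(s_1)| = 1$ (hence $\rho_{s}(s_1) \ne 0$, so $\rho$ is strictly monotone near $s_1$), and the formulas \eqref{k1k2} for $\kappa_1,\kappa_2$. At a zero of $t_s$ the formula for $\kappa_2$ gives $\kappa_2(s_1) = 0$, and the formula for $\kappa_1$ collapses to $\kappa_1(s_1) = e^{h}(t_{ss}\rho_s)(s_1)$ since the terms carrying $t_s$ drop out. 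Now I invoke Corollary \ref{c1}: for an EWMT-surface the extrinsic curvature $K = \kappa_1\kappa_2$ is non-positive, and $K(p)=0$ if and only if both principal curvatures vanish at $p$. Since $\kappa_2(s_1)=0$, we get $K(s_1)=0$, hence $\kappa_1(s_1)=0$ as well, i.e. $t_{ss}(s_1)\rho_s(s_1) = 0$; because $\rho_s(s_1)\ne 0$ this yields $t_{ss}(s_1)=0$.

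So at the putative interior zero $s_1$ we have $t_s(s_1) = t_{ss}(s_1) = 0$. The next step is to upgrade this to the conclusion that $t_s \equiv 0$ near $s_1$, using a uniqueness-of-ODE argument: I would rewrite the EWMT equation \eqref{ge1} (or equivalently $H = f(H^2 - K)$ with $H = \tfrac12(\kappa_1+\kappa_2)$, $K=\kappa_1\kappa_2$ expressed via \eqref{k1k2}) together with the arclength constraint as a first-order autonomous ODE system for $(\rho, t, \rho_s, t_s)$ — differentiating the constraint shows $\rho_{ss}$ is determined by the lower-order data and $t_{ss}$, and then \eqref{ge1} determines $t_{ss}$ as a function of $(\rho,\rho_s,t_s)$; ellipticity (via Lemma \ref{l1}, the fact that $g(x)=x-f(x^2)$ is strictly increasing with $g' \ne 0$) is exactly what is needed to solve \eqref{e1} smoothly for $H$ in terms of $H^2-K$, hence to isolate $t_{ss}$ smoothly, so the right-hand side is $C^1$ and Picard–Lindelöf applies. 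One checks that the ``flat'' locus — the horizontal slice, parametrized as $t(s)\equiv t_1$ constant, $\rho(s) = s + \text{const}$, $\rho_s \equiv 1$, $t_s \equiv 0$ — is itself a solution of this system (a slice is totally geodesic, hence EWMT). Since our curve $\gamma$ and the slice share the same $1$-jet at $s_1$ (same $\rho(s_1)$, same $t(s_1)$, same $\rho_s(s_1)=\pm1$, same $t_s(s_1)=0$), uniqueness forces $\gamma$ to coincide with that slice on a neighborhood of $s_1$.

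Finally I would propagate this: let $A = \{ s \in J : t_s(s) = 0 \text{ on a neighborhood of } s\}$; the argument above shows $A$ is open, it is obviously closed in $J$ by continuity of $t_s$ and its derivatives, and it is nonempty (it contains $s_1$), so $A = J$ by connectedness of the interval $J$. Hence $t_s \equiv 0$ on all of $J$, meaning $t$ is constant and $\Sigma_\gamma$ is a horizontal slice — contradicting the standing assumption that $\Sigma_\gamma$ is not a slice. Therefore $t_s$ has no interior zero. I expect the main obstacle to be the careful bookkeeping in the second step: writing \eqref{ge1} in a genuinely $C^1$ first-order form requires using ellipticity to invert $H \mapsto H - f(H^2 - K)$ and checking that the resulting expression for $t_{ss}$ stays regular exactly at points where $t_s = t_{ss} = 0$ and $\rho_s = \pm1$ (so that no denominator like $\rho$ or $e^{2h}+\rho_t^2$ degenerates), and verifying that the slice really is an integral curve of the same normalized system rather than just of \eqref{ge1} in the $\rho = \rho(t)$ chart, which breaks down precisely when $t_s=0$.
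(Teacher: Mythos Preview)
Your proof is correct and follows essentially the same route as the paper: write the EWMT relation in arclength as $Q(\rho,\rho_s,t_s,t_{ss})=0$, use ellipticity to see $\partial Q/\partial t_{ss}\neq 0$, apply the implicit function theorem and Picard--Lindel\"of, and compare with the slice solution to force $t_s\equiv 0$ locally, then globalize. Two minor remarks: your step using Corollary~\ref{c1} to deduce $t_{ss}(s_1)=0$ is correct but redundant, since the first-order ODE system only needs $(\rho,\rho_s,t_s)$ as initial data and the slice already matches these; and your open--closed propagation on $J$ works (though closedness of $A$ requires reapplying the local argument at a limit point, not just continuity), while the paper instead invokes the maximum principle to pass from ``an open piece is a slice'' to ``$\Sigma_\gamma$ is a slice.''
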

\begin{proof}
Suppose that there exist $s^{*}\in \overline{J}$, for some interval $\overline{J}\subset Dom(\gamma)$, here $Dom(\gamma)$ denotes the domain of the generating curve $\gamma(s)$, such that $t_s(s^{*})=0$. Since $s$ is the arclength parameter for $\gamma(s)$, we have 
$$e^{2h(\rho(s))}t_s^2(s)+\rho_s^2(s)=1$$ 
therefore $\rho_s(s^{*})\neq0$. In particular $\rho_s(s)\neq0$ in some interval which we continue to label by $\overline{J}$. The derivative of the expression $e^{2h(\rho(s))}t_s^2(s)+\rho_s^2(s)=1$ gives the condition 
\begin{equation}\label{e7}
e^{2h}t_s(t_{ss}+h_\rho t_s\rho_s)+\rho_s\rho_{ss}=0.
\end{equation}
Using these expression, the principal curvatures $\kappa_1=\kappa_1(s)$ and $\kappa_2=\kappa_2(s)$ becomes (remind $\rho(s)>0$)
\begin{eqnarray}
\label{k33}\kappa_1 & = & e^h\dfrac{t_{ss}}{\rho_s}+2e^hh_\rho t_s=e^h\left( \dfrac{t_{ss}}{\rho_s}+2h_\rho t_s \right),  \\
\label{k44}\kappa_2 & = & \dfrac{e^ht_s}{\rho}. 
\end{eqnarray}

\noindent For any $s\in \overline{J}$, since we consider $\Sigma_\gamma$ oriented by the unit normal vector field $N$ given at \eqref{e6}, the Weingarten equation \eqref{e1} becomes
\begin{equation}\label{e9}
\dfrac{e^{h}}{2}\left(  \dfrac{t_{ss}}{\rho_s}+ \dfrac{t_s}{\rho} +2h_\rho t_s  \right) -f\left(\dfrac{e^{2h}[(t_{ss}+2h_\rho \rho_s t_s)\rho-t_s\rho_s]^2}{4\rho^2\rho_s^2} \right)=0.
\end{equation}
\noindent For $s\in \overline{J}$, equation \ref{e9} can be rewrite as $Q(\rho,\rho_s,t_s,t_{ss})=0$, where 
\begin{equation}
\label{FunctionQ}
Q(u,v,z,w)=\dfrac{e^{h(u)}}{2}\left(  \dfrac{w}{v}+ \dfrac{z}{u} +2h_\rho(u) z  \right) -f\left(\dfrac{e^{2h(u)}[(w+2h_\rho(u) v z)u-zv]^2}{4u^2v^2} \right)=0
\end{equation}
For $u>0$, $v>0$, $z,w\in\R$. A straightforward computation gives
\begin{equation*}
\dfrac{\partial Q}{\partial w}=\dfrac{e^{h(u)}}{2v}\left(1-2\beta f^\prime(\beta^2) \right) \hspace{.3cm} \textnormal{where} \hspace{.2cm} \beta=\dfrac{e^{h(u)}\left[ (w+2h_\rho(u) v z)u-zv \right]}{2uv}.
\end{equation*} 
From ellipticity, we get $1-2\beta f_0^\prime(\beta^2)>0$. Thus, the function $Q=Q(u,v,z,w)$ is strictly increasing (resp. strictly decreasing) with respect to $w$, when restricted to $\{v>0\}$ (resp. $\{v<0\}$). From $\rho_s(s^{*})\neq0$, we have
\begin{equation*}
\dfrac{\partial Q}{\partial w}\left(\rho(s^{*}),\rho_s(s^{*}),t_s(s^{*}),t_{ss}(s^{*}) \right)\neq 0
\end{equation*}
Once here, the implicit function theorem ensures that there exist a $C^1$ function $p$ from a neighborhood of $\left(\rho(s^{*}),\rho_s(s^{*}),t_s(s^{*}) \right)$ in $\R^3$ into a neighborhood of $t_{ss}(s^{*})$ in $\R$ such that $t_{ss}=p\left(\rho,\rho_s,t_s\right)$ in a small interval which we call again $\overline{J}$. If we set $v_1=\rho$, $v_2=\rho_s$, $v_3=t_s$, the Weingarten surface equation $Q(\rho,\rho_s,t_s,t_{ss})=0$ in $\overline{J}$ becomes
\begin{eqnarray}
\nonumber v_1^\prime & = & v_2 \\
\label{e10}  v_3^\prime & = & p(v_1,v_2,v_3) 
\end{eqnarray}
Given initial values $\rho(s^{*})=\rho_{s^{*}}$, $\rho_s(s^{*})=\widehat{\rho}_{s^{*}}$, $t(s^{*})=t_{s^{*}}$, $t_s(s^{*})=0$, the Picard-Lindelof theorem ensures the existence and uniqueness of a solution $(\rho(s),t(s))$ for the ordinary differential system \eqref{e10} with these initial values at $s^{*}$ defined in some $J_2\subset J_1$. By uniqueness,
\begin{equation*}
\rho(s)=\rho_{s^{*}}+\widehat{\rho}_{s^{*}}(s-s^{*}) \hspace{.3cm} \textnormal{and} \hspace{.3cm} t(s)=t_{s^{*}}
\end{equation*}
for any $s\in J_2$. Thus, an open part of $\Sigma_\gamma$ is a piece of the slice $S(s^{*})$ and the maximum principle implies that $\Sigma_{\gamma} \subset S(s^{*})$ so we get a contradiction.
\end{proof}

%That means, the open piece of the slice $S_{t_0}$ 

%is given as a rotational EWMT-surface, a contradiction since  that the warping function $h$ is not constant which implies that $S_{t_0}$ is a special Weingarten surface of constant mean curvature type. 

%We have the next proposition.

\begin{remark}
As a immediate consequence from Corollary \ref{barriers}, there is no compact, without boundary EWMT-surface immersed into $\irf$. Moreover, the Lemma \ref{l2} implies that the generating curve $\gamma$ of the complete surface $\Sigma_\gamma$, must be defined in $Dom(\gamma)=\R$. 
\end{remark}

\noindent From now on, we assume that the function $t_s(s)$ is positive for all $s\in Dom(\gamma)=\R$. The following lemma guarantee that the function $\rho(s)$ have a unique extreme value and this is a global minimum, when the warping function $h$ has positive derivative. 
 
 %shows that there exists only a global minimum for such function.
 %Remark \ref{r2} 
 
\begin{lemma}\label{l5}
Let $\Sigma_\gamma\looparrowright\irf$ be a rotationally-invariant EWMT-surface, which is immersed into the warped product $\irf$ having warping function which satisfy $h_\rho(\rho)>0$ for all $\rho\in\R$. Let  $s_0\in\R$ be the number where the function $\rho=\rho(s)$ attains a local minimum. Then, the function $\rho(s)$ has a unique local extreme at $s_{0}$ and $\rho(s_{0})$ is a global minimum.

\end{lemma}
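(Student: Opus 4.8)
\emph{Approach.} My plan is to show that the sign restriction on the extrinsic curvature (Corollary~\ref{c1}) forces \emph{every} critical point of the profile function $\rho=\rho(s)$ to be a strict local minimum, and then to deduce by elementary one-variable calculus that a $C^2$ function with this property has at most one critical point; since $s_0$ is given to be a local minimum, it must then be the only one, and monotonicity on each side makes it a global minimum.

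\emph{Step 1: behaviour at a critical point.} Suppose $\rho_s(s_1)=0$ for some $s_1$. The arc-length relation $e^{2h}t_s^2+\rho_s^2=1$ gives $\bigl(e^{h(\rho(s_1))}t_s(s_1)\bigr)^2=1$; since we are assuming $t_s>0$, this yields $e^{h(\rho(s_1))}t_s(s_1)=1$, and in particular $t_s(s_1)\neq0$. Substituting $\rho_s=0$ into \eqref{k1k2} and using this,
\begin{equation*}
\kappa_1(s_1)=e^{h}t_s\bigl(h_\rho(\rho(s_1))-\rho_{ss}(s_1)\bigr)=h_\rho(\rho(s_1))-\rho_{ss}(s_1),\qquad \kappa_2(s_1)=\frac{e^{h}t_s(s_1)}{\rho(s_1)}=\frac{1}{\rho(s_1)}.
\end{equation*}
Hence $K(s_1)=\kappa_1(s_1)\kappa_2(s_1)=\bigl(h_\rho(\rho(s_1))-\rho_{ss}(s_1)\bigr)/\rho(s_1)$. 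By Corollary~\ref{c1}, $K(s_1)\leq0$, so $\rho_{ss}(s_1)\geq h_\rho(\rho(s_1))>0$, using the hypothesis $h_\rho>0$. Therefore $s_1$ is a strict local minimum of $\rho$; in particular $\rho$ is nowhere locally constant.

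\emph{Step 2: monotonicity and conclusion.} At $s_0$ we have $\rho_s(s_0)=0$ and, by Step~1, $\rho_{ss}(s_0)>0$, so $\rho_s>0$ on an interval $(s_0,s_0+\delta)$. If $\rho_s$ vanished again for some $s>s_0$, let $s_1$ be the least such point; then $\rho_s>0$ on $(s_0,s_1)$ and $\rho_s(s_1)=0$, which forces $\rho_{ss}(s_1)\leq0$ and contradicts Step~1. Hence $\rho_s>0$ on $(s_0,\infty)$, and symmetrically $\rho_s<0$ on $(-\infty,s_0)$ (recall $Dom(\gamma)=\R$ for the complete surface). Thus $\rho$ is strictly decreasing before $s_0$ and strictly increasing after it, so $s_0$ is the unique critical point — hence the unique local extreme — of $\rho$, and $\rho(s_0)$ is a global minimum.

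\emph{Main obstacle.} The argument is short; the only delicate point is Step~1, where one must evaluate the unreduced curvature expression \eqref{k1k2} at $\rho_s=0$ (not the reduced form \eqref{k33}, which was derived under the complementary assumption $\rho_s\neq0$), and where both the orientation convention $t_s>0$ and the hypothesis $h_\rho>0$ are genuinely used: with $h_\rho\leq0$ a critical point of $\rho$ could be a local maximum and the statement would fail. Once $\rho_{ss}>0$ at every critical point is in hand, Step~2 is routine.
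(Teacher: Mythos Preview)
Your proof is correct and rests on the same mechanism as the paper's: the sign constraint from Corollary~\ref{c1} (here $\kappa_2>0$ forces $\kappa_1<0$) combined with $h_\rho>0$ yields $\rho_{ss}>0$, which rules out any critical point of $\rho$ other than the given minimum. The one difference is scope: the paper uses the expression \eqref{k11} for $\kappa_1$, valid for \emph{all} $s$ since $t_s\neq0$ by Lemma~\ref{l2}, and thereby obtains $\rho_{ss}(s)>0$ for every $s$, so $\rho_s$ is globally strictly increasing and the conclusion is immediate; you instead evaluate only at critical points via \eqref{k1k2} and finish with a first-zero argument. Both are valid for the lemma, but the paper's route also delivers the global convexity $\rho_{ss}>0$ recorded in Corollary~\ref{rmkri} and used in Propositions~\ref{l6} and~\ref{l7}, which your argument does not directly supply.
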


\begin{proof}
%First, we prove that $\rho_{ss}(s_0)\neq0$. 
From lemma \ref{l2}, $t_s(s)\neq0$. Then, the equations in \eqref{e7} of the principal curvatures $\kappa_1=\kappa_1(s)$ and $\kappa_2=\kappa_2(s)$ becomes
\begin{eqnarray}
\label{k11}\kappa_1(s) & = &\left(\dfrac{e^{2h}h_\rho t_s^2-\rho_{ss}}{e^ht_s}  \right)(s), \\
\label{k22}\kappa_2(s) & = & \left(\dfrac{e^ht_s}{\rho}\right)(s). 
\end{eqnarray}
Now, from corollary \ref{c1}, $\kappa_{1}(s)<0$. Then, the equation \eqref{k11} and the hypotheses  $h_{\rho} > 0$ imply that $\rho_{ss}(s)>0$. So, the function $\rho_{s}(s)$ is a strictly increasing function. Since $\rho_{s}(s_{0}) = 0$, we get $\rho_s(s)<0$ for $s\in(-\infty,s_0)$ and $\rho_s(s)>0$ for $s\in(s_0,+\infty)$. Consequently, $s_{0}$ is the unique global minimum of $\rho(s)$.
\end{proof}
%\begin{eqnarray*}
%\kappa_1(s_0) & = & -\left( e^{f}\dfrac{\rho_{ss}}{t_s}\right)(s_0) \\
%\kappa_2(s_0) & = & \left( e^{-f}\dfrac{t_s}{\rho}\right)(s_0) 
%\end{eqnarray*}
%If $\rho_{ss}(s_0)=0$, then $\kappa_1(s_0)=0$ which implies $\kappa_2(s_0)=0$. That is
%\begin{equation*}
% e^{-f(t(s_0))}\dfrac{t_s(s_0)}{\rho(s_0)}=0 
%\end{equation*}
%a contradiction. Therefore, $\rho_{ss}(s_0)\neq0$ and $\rho(s)$ has a maximum or minimum local. Remark \ref{r2} shows that such local extreme is a minimum.

%The existence of the point $\gamma(s_0)$ assumed in Lemma \ref{l5} is guarantee from the Theorem \ref{EUT}. Moreover, the condition $t_s(s)\neq0$ for all $s\in\R$, implies that $\gamma(s)$ can see as a graph of a function $y(t)$. Therefore,  we consider (from now on) the global minimum of the curve $\gamma(s)$ given at $(0,0,y(0))=(0,0,\tau)=\gamma(s_0)$. 

\noindent As consequence of lemma \ref{l5}, we obtain the next corollary. 

\begin{corollary}
\label{rmkri}
Let $\Sigma_\gamma\looparrowright\irf$ be a rotationally-invariant EWMT-surface, which is immersed into the warped product $\irf$ having warping function which satisfy $h_\rho(\rho)>0$ for all $\rho\in\R$. Then, the generating curve $\gamma(s)=(\rho(s),0,t(s))$ has functions $\rho(s)$ and $t(s)$ satisfying the following  properties :
\begin{enumerate}
\item $\rho_s(s)<0$ for $s\in(-\infty,s_0)$ and $\rho_s(s)>0$ for $s\in(s_0,+\infty)$.
\item For $t_s>0$, we have $\kappa_2(s)>0$,  for all $s$, which implies $\kappa_1(s)<0$,  for all $s$.
\item For the function $\rho_{ss}(s)$, we have
\begin{equation}\label{e14}
\rho_{ss}>0, \hspace{.3cm} \forall s.
\end{equation}
\item For the function $t_{ss}(s)$, we obtain
\begin{equation}\label{e15}
t_{ss}=e^{-h}\rho_s( \kappa_1-2h_\rho t_s),
\end{equation}
which implies that $t_{ss}(s)<0$ for all $s>s_0$, $t_{ss}(s_0)=0$ and $t_{ss}(s)>0$ for all $s<s_0$.
\end{enumerate} 
\end{corollary}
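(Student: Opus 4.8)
The corollary simply collects, and lightly extends, what Lemmas \ref{l2} and \ref{l5} already give once one has set the orientation so that $t_s>0$. The plan is therefore to derive the four items in order, using at each step the sign information on $\kappa_1$ and $\kappa_2$ coming from Corollary \ref{c1} together with the arclength relation $e^{2h}t_s^2+\rho_s^2=1$ and its differentiated form \eqref{e7}.

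First I would recall from Lemma \ref{l5} that $\rho_s(s_0)=0$, that $\rho_{ss}>0$ everywhere, and hence that $\rho_s$ is strictly increasing; this immediately yields item (1) and, in fact, item (3): since $h_\rho>0$, $t_s>0$ and, by Corollary \ref{c1}, $\kappa_1<0$ (the surface is not a slice, so both principal curvatures cannot vanish, and $K=\kappa_1\kappa_2\le 0$ with $\kappa_2=e^ht_s/\rho>0$ forces $\kappa_1<0$), equation \eqref{k11} rewritten as $\rho_{ss}=e^{2h}h_\rho t_s^2-e^h t_s\kappa_1>0$ gives \eqref{e14}. Item (2) is then the observation just made: $t_s>0$ and $\rho(s)>0$ give $\kappa_2(s)=e^ht_s/\rho>0$ for all $s$, and since $K=\kappa_1\kappa_2\le0$ this forces $\kappa_1(s)\le0$, with equality impossible by the second part of Corollary \ref{c1} (it would force $\kappa_2=0$ too), hence $\kappa_1(s)<0$ for all $s$.

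For item (4), I would start from the expression \eqref{k11} for $\kappa_1$, namely $\kappa_1 e^h t_s = e^{2h}h_\rho t_s^2-\rho_{ss}$, and combine it with the differentiated arclength relation \eqref{e7}, $e^{2h}t_s(t_{ss}+h_\rho t_s\rho_s)+\rho_s\rho_{ss}=0$, to eliminate $\rho_{ss}$ and solve for $t_{ss}$. A short manipulation should reduce to $t_{ss}=e^{-h}\rho_s(\kappa_1-2h_\rho t_s)$, which is formula \eqref{e15}; then, since $\kappa_1<0$, $h_\rho>0$ and $t_s>0$, the factor $\kappa_1-2h_\rho t_s$ is strictly negative, so the sign of $t_{ss}$ is exactly the opposite of the sign of $\rho_s$. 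Invoking item (1) (or Lemma \ref{l5}) gives $t_{ss}(s)<0$ for $s>s_0$, $t_{ss}(s_0)=0$, and $t_{ss}(s)>0$ for $s<s_0$, as claimed.

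The only genuinely computational point is the derivation of \eqref{e15}: one must be careful to use the correct form of the principal curvatures (the arclength form \eqref{k11}, valid precisely because $t_s\neq 0$ by Lemma \ref{l2}) and to substitute consistently from \eqref{e7}; everything else is sign-chasing. I expect no real obstacle here — the main thing to double-check is that the orientation convention fixed before the corollary (taking $N$ as in \eqref{e6} with $t_s>0$) is the one that makes $\kappa_1<0$ via Corollary \ref{c1}, since all the sign statements hinge on that.
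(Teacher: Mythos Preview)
Your plan is correct and matches the paper's intended argument (the paper itself gives no explicit proof, presenting the corollary simply as a consequence of Lemma~\ref{l5}, so what you outline \emph{is} the proof). One small warning for when you actually carry out the computation in item~(4): solving \eqref{k33} for $t_{ss}$ gives
\[
t_{ss}=\rho_s\bigl(e^{-h}\kappa_1-2h_\rho t_s\bigr),
\]
not $e^{-h}\rho_s(\kappa_1-2h_\rho t_s)$ as printed in \eqref{e15}; the $e^{-h}$ factor does not distribute over the second term. This appears to be a typo in the statement, and it is harmless for the argument since the parenthetical factor is still strictly negative (as $\kappa_1<0$, $h_\rho>0$, $t_s>0$), so the sign of $t_{ss}$ is opposite to that of $\rho_s$ exactly as you conclude.
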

%Moreover, we have the next consequences.
%\begin{lemma}\label{lk}
%Let $\Sigma_\gamma$ be a rotational $f_0$-surface immersed into the warped product $\irf$, with $h_\rho>0$, which is generated by a curve $\gamma$, around the $t$-axis. Suppose $\Sigma_\gamma$ is oriented via the unit normal vector field $N$ given at \eqref{e6}. Then, the principal curvatures $\kappa_1(s)$ and $\kappa_2(s)$ never vanishes. Furthermore, we have
%\begin{itemize}
%\item $\kappa_1(s)<0$ and $\kappa_2(s)>0$ for $s\in\R$.
%\item For the functions $\rho_{ss}(s)$, we have
%\begin{equation}\label{e14}
%\rho_{ss}>0, \hspace{.3cm} \forall s.
%\end{equation}
%While for the function $t_{ss}(s)$ we obtain
%\begin{equation}\label{e15}
%t_{ss}=e^{-h}\rho_s( \kappa_1-2h_\rho t_s),
%\end{equation}
%which implies that $t_{ss}(s)<0$ for all $s>s_0$, $t_{ss}(s_0)=0$ and $t_{ss}(s)>0$ for all $s<s_0$.
%\end{itemize}
%\end{lemma}

\noindent Let us mention some important consequences of lemma \ref{l5} and corollary \ref{rmkri}. First, from the sign of the principal curvatures, we have the next corollary.

\begin{corollary}\label{c2}
Let $\Sigma_\gamma$ be a rotationally-invariant surface EWMT-surface immersed in $\irf$, which is generated by a curve $\gamma$  around the $t$-axis with $t_{s}>0$. Suppose that $h_\rho(\rho)>0$ for all $\rho\in\R$ and $\Sigma_\gamma$ is oriented via the unit normal vector field $N$ given at \eqref{e6}. Then, the generating curve cannot touch the axis of revolution.
\end{corollary}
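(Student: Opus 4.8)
The plan is to show that along such a surface the radial coordinate $\rho(s)$ of the generating curve stays bounded below by a positive constant, so that $\gamma(s)=(\rho(s),0,t(s))$ can never reach the revolution axis $\{\rho=0\}$. The whole argument reduces to the strict convexity of the function $\rho(s)$, and it is precisely the sign of the principal curvatures that produces this convexity.

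First I would recall that, under the standing hypotheses $h_\rho>0$ and $t_s>0$, Corollary \ref{c1} together with Corollary \ref{rmkri} give $\kappa_1(s)<0$ for every $s$. Substituting this into the expression \eqref{k11} for $\kappa_1$ and using $h_\rho>0$, $t_s>0$ one obtains
\[
\rho_{ss}=e^{2h}h_\rho t_s^2-e^{h}t_s\kappa_1>0\qquad\text{for every }s,
\]
so $\rho_s$ is strictly increasing. Since by Theorem \ref{EUT} (equivalently, by Lemma \ref{l5}) the function $\rho$ attains a minimum at some point $s_0$ with $\rho_s(s_0)=0$, this forces $\rho_s<0$ on $(-\infty,s_0)$ and $\rho_s>0$ on $(s_0,+\infty)$, whence $\rho(s)\ge\rho(s_0)$ for every $s$ in the domain of $\gamma$. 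Finally, $\rho$ is by construction a strictly positive function — it is the distance of $\gamma(s)$ to the $t$-axis, and no point of $\gamma$ lies on the axis by hypothesis on the parametrization — so $\rho(s_0)>0$. Combining, $\rho(s)\ge\rho(s_0)>0$ for all $s$, including in the limit as $s$ approaches the endpoints of the interval on which $\gamma$ is defined; hence $\gamma$ stays at distance at least $\rho(s_0)>0$ from the revolution axis and, in particular, cannot touch it.

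I do not foresee a genuine obstacle here: everything collapses to the monotonicity of $\rho_s$, i.e. to $\rho_{ss}>0$, which is exactly where the sign $\kappa_1<0$ is used, so the only point that needs a little care is to check that the conclusion survives when $\Sigma_\gamma$ is not complete and $\mathrm{Dom}(\gamma)$ is a proper subinterval of $\R$; this causes no trouble, since convexity of $\rho$ together with $\rho_s(s_0)=0$ yields the bound $\rho\ge\rho(s_0)$ on whatever interval $\gamma$ happens to be defined. As a purely local alternative, one could argue by contradiction: if $\rho(s_n)\to 0$ along some sequence then, since $t_s^2=e^{-2h}(1-\rho_s^2)$ stays bounded away from $0$ on compact pieces, $\kappa_2=e^{h}t_s/\rho\to+\infty$, which is impossible along an immersed surface; but the convexity argument is cleaner and also controls the behaviour at the ends.
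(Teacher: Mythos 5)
Your argument is correct for the surfaces the paper actually has in view, but it is a genuinely different proof from the one in the text, and it relies on a hypothesis that Corollary \ref{c2} does not state. The paper's proof is purely \emph{local} at the hypothetical contact point: if the generating curve met the axis, regularity of the immersed surface would force the intersection to be orthogonal, the contact point would then be umbilical by rotational symmetry, and Corollary \ref{c1} would force $\kappa_1=\kappa_2=0$ there --- contradicting $\kappa_2=e^{h}t_s/\rho>0$ (equivalently, contradicting $t_s>0$, since orthogonality means $t_s=0$ at that point). Your proof is instead \emph{global}: from $\kappa_1<0$ and \eqref{k11} you correctly get $\rho_{ss}>0$, and the existence of an interior critical point $s_0$ of $\rho$ then yields the uniform bound $\rho(s)\ge\rho(s_0)>0$. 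That bound is stronger than the bare statement (it also excludes the curve limiting onto the axis at the ends of its domain), which is a real gain.

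The caveat: the existence of $s_0$ with $\rho_s(s_0)=0$ is a \emph{hypothesis} of Lemma \ref{l5} and a feature of the surfaces constructed in Theorem \ref{EUT}; it is not among the hypotheses of Corollary \ref{c2}, whose statement admits any rotationally-invariant EWMT-surface with $t_s>0$. For a generating curve along which $\rho_s$ never vanishes (monotone $\rho$), your main argument gives no lower bound at all, while the paper's local argument still applies. Relatedly, the sentence ``$\rho$ is by construction strictly positive, so $\rho(s_0)>0$'' is where the conclusion is implicitly assumed at interior points, so the genuine content of your bound is only the exclusion of contact in the limit. Your parenthetical alternative at the end --- blow-up of $\kappa_2=e^{h}t_s/\rho$ as $\rho\to0$ while $t_s$ stays bounded away from zero --- is the argument closest in spirit to the paper's and the one that covers the general case; either promote it from an aside to the proof itself, or state explicitly that you are working with the surfaces of Theorem \ref{EUT}, for which $s_0$ exists.
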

\begin{proof}
Since $\Sigma_\gamma$ is a regular surface, if $\Sigma_\gamma$  cut the axis of revolution, it must be orthogonally and  the intersection point $p_0$ must be an umbilical point of the surface $\Sigma_\gamma$. Since $\Sigma_\gamma$ is a EWMT-surface, we have that the principal curvatures $\kappa_1(p)=\kappa_2(p)=0$ vanish identically, and this is a contradiction.
\end{proof}

\noindent By using lemma \ref{l5}, a rotationally-invariant surface EWMT-surface have a plane of symmetry. 

\begin{proposition}
\label{Psym}
Let $\Sigma_\gamma\looparrowright\irf$ be a rotationally-invariant EWMT-surface immersed into the warped product $\irf$ defined by equation \eqref{arcapars}. Suppose that the warping function satisfy $h_\rho(\rho)>0$ for all $\rho\in\R$. Let $s_0\in\R$ be the number such that the function $\rho=\rho(s)$ attains its global minimum. Suppose that the curve $\gamma(s)$ is defined in an interval $I= (s_{0}-\delta,s_{0}+\delta)$ possibly $\delta=\infty$. Then, $\Sigma_{\gamma}$ is symmetric respect to slice $S(t(s_{0}))$. More precisely, we have $\rho(s)=\rho(2s_{0}-s)$ and $t(s)=2t(s_{0})-t(2s_{0}-s)$ for any $s \in I$.
\end{proposition}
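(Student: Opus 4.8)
The plan is to combine the reflection isometry of $\irf$ across the slice $S(t(s_0))$ with uniqueness of solutions of the ODE satisfied by the generating curve. After a vertical translation (an isometry of $\irf$) we may assume $t(s_0)=0$, so that the asserted identities become $\rho(s)=\rho(2s_0-s)$ and $t(s)=-t(2s_0-s)$ for $s\in I$. Writing $\Phi(\rho,\omega,t)=(\rho,\omega,-t)$, this says exactly that $\gamma$ is invariant under the transformation $s\mapsto 2s_0-s$ followed by $\Phi$. I therefore introduce the curve $\bar\gamma(s):=(\rho(2s_0-s),0,-t(2s_0-s))$, which is defined on all of $I$ because $I$ is symmetric about $s_0$, and the rotationally-invariant surface $\Sigma_{\bar\gamma}$ it generates; the goal is then to prove $\bar\gamma\equiv\gamma$.

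First I would check that $\Sigma_{\bar\gamma}$ is again a rotationally-invariant EWMT-surface. Conceptually $\Sigma_{\bar\gamma}=\Phi(\Sigma_\gamma)$ and $\Phi$ is an isometry, since the warped metric $d\rho^2+\rho^2d\omega^2+e^{2h(\rho)}dt^2$ does not involve $t$ and is invariant under $t\mapsto-t$. Concretely, $\bar\gamma$ is arclength-parametrized, $e^{2h(\bar\rho)}\bar t_s^2+\bar\rho_s^2=e^{2h}t_s^2+\rho_s^2=1$, and using $\bar\rho_s=-\rho_s$, $\bar\rho_{ss}=\rho_{ss}$, $\bar t_s=t_s$, $\bar t_{ss}=-t_{ss}$ (all evaluated at $2s_0-s$), the formulas \eqref{k1k2} for the principal curvatures of a rotationally-invariant surface give $\kappa_i^{\bar\gamma}(s)=\kappa_i^{\gamma}(2s_0-s)$ for $i=1,2$, with respect to the normal \eqref{e6}. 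Hence $H^{\bar\gamma}(s)=H^{\gamma}(2s_0-s)$ and $K^{\bar\gamma}(s)=K^{\gamma}(2s_0-s)$, so $\Sigma_{\bar\gamma}$ satisfies \eqref{e1} with the same $f$.

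Next I would match Cauchy data at $s_0$ and apply uniqueness. By Lemma \ref{l5} one has $\rho_s(s_0)=0$, so $\bar\gamma$ and $\gamma$ have the same position and velocity at $s_0$: $\bar\rho(s_0)=\rho_0=\rho(s_0)$, $\bar\rho_s(s_0)=0=\rho_s(s_0)$, $\bar t(s_0)=0=t(s_0)$, $\bar t_s(s_0)=t_s(s_0)$. Since $t_s(s_0)\neq0$ by Lemma \ref{l2}, near $s_0$ both curves are graphs $\rho=\rho(t)$ over an interval about $0$ with $\rho(0)=\rho_0$, $\rho_t(0)=0$, solving \eqref{ge1}. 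Using Lemma \ref{l1} (the map $y\mapsto y-f(y^2)$ is a $C^1$ diffeomorphism onto its image), one solves \eqref{ge1} for $\rho_{tt}$ in a neighbourhood of the given solution, obtaining a second-order ODE $\rho_{tt}=F(\rho,\rho_t)$ with $F$ of class $C^1$; moreover $F$ is even in its second argument, since \eqref{ge1} involves $\rho_t$ only through $\rho_t^2$. Hence $t\mapsto\rho(-t)$ is also a solution with the same initial data, and Picard-Lindelof gives $\rho(t)=\rho(-t)$ near $0$; translating back to arclength, $\gamma\equiv\bar\gamma$ on a neighbourhood of $s_0$.

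Finally I would propagate the symmetry to all of $I$ by connectedness. Let $A\subset I$ be the set of $s$ at which $\gamma$ and $\bar\gamma$ agree together with their first derivatives; $A$ is closed by continuity and contains a neighbourhood of $s_0$ by the previous step. For openness, fix $s^*\in A$; if $s^*\neq s_0$ then $\rho_s(s^*)\neq0$ by Lemma \ref{l5}, and near such a point the arclength equations form a first-order $C^1$ system for $(\rho,\rho_s,t,t_s)$ — one solves \eqref{e9} for $t_{ss}$ via the implicit function theorem exactly as in the proof of Lemma \ref{l2}, and \eqref{e7} for $\rho_{ss}$ — so Picard-Lindelof applies; since $\bar\gamma$ solves this same system and agrees with $\gamma$ to first order at $s^*$, we get $\gamma\equiv\bar\gamma$ near $s^*$. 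Thus $A$ is open, hence $A=I$, which is the assertion. I expect the only delicate point to be the degeneracy of the arclength parametrization at $s_0$, where $\rho_s=0$ and the form \eqref{e9} (with $\rho_s$ in a denominator) is useless; this is precisely why the middle step passes to the graph $\rho=\rho(t)$ there and uses Lemma \ref{l1} to put \eqref{ge1} into genuine $C^1$ normal form, which is the heart of the argument.
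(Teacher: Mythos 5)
Your proof is correct, and its skeleton --- reflect the generating curve through the slice, verify that the reflected curve still generates an EWMT-surface with the same $f$ and the same normal convention, match the Cauchy data at $s_0$, and conclude by uniqueness --- is exactly the paper's. Where you diverge is in how the uniqueness step is executed, and in both places your version is the more careful one. First, the paper invokes ``uniqueness of solution for equation \eqref{FunctionQ}'' at $s_0$, but the normal form $t_{ss}=p(\rho,\rho_s,t_s)$ extracted from $Q$ in Lemma \ref{l2} requires $\partial Q/\partial w\neq0$, which the paper only establishes when $v=\rho_s\neq0$; this fails precisely at $s_0$, where $\rho_s(s_0)=0$. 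Your detour through the graph equation \eqref{ge1} near $s_0$ --- using $t_s(s_0)\neq0$ to write the curve as $\rho=\rho(t)$, and Lemma \ref{l1} (invertibility of $y\mapsto y-f(y^2)$) to put \eqref{ge1} into the genuine $C^1$ normal form $\rho_{tt}=F(\rho,\rho_t)$ with $F$ even in $\rho_t$ --- is the right repair, and it is the point the paper glosses over. Second, for globalization the paper appeals to the maximum principle, whereas you run an open--closed continuation along $I$ using the arclength system away from $s_0$ (where Lemma \ref{l5} guarantees $\rho_s\neq0$, so the implicit-function-theorem reduction of Lemma \ref{l2} applies); this is more elementary and avoids setting up the interior tangency configuration the maximum principle needs. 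The only micro-step worth making explicit in your openness argument is why $\bar t_{ss}(s^*)=t_{ss}(s^*)$ at a point $s^*$ where the curves agree to first order: this follows from the strict monotonicity of $Q$ in $w$ on $\{v>0\}$ (resp.\ $\{v<0\}$) established in Lemma \ref{l2}, which forces both curves onto the same solution branch of the implicit function theorem. With that remark added, your argument is complete.
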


\begin{proof}
Since $e^{2h(\rho(s))}t_s^2(s)+\rho_s^2(s)=1$ and $\rho_{s}(s_{0}) = 0$, we conclude that $t_{s}(s_{0}) \neq 0$. Define the rotationally-invariant surface $\Sigma_{\tilde{\gamma}}$ generated by the curve $$\tilde{\gamma}(s) = (\tilde{\rho}(s),0,\tilde{t}(s))$$
where $\tilde{\rho}(s)= \rho(2s_{0}-s)$ and $\tilde{t}(s)=2t(s_{0})-t(2s_{0}-s)$ for $s \in I$. On the one hand, $\tilde{t}_{s}(s_{0}) = t_{s}(s_{0}) \neq 0$ in a small neighborhood of $s_{0}$. By other hand, from definition of $h(\rho)$ and $\tilde{\rho}(s)$, we can check that for the function $Q(u,v,z,w)$ in the equation \eqref{FunctionQ}, we have $Q(\tilde{\rho}, \tilde{\rho}_{s}, \tilde{t}_{s}, \tilde{t}_{ss}) = 0$ and then $\Sigma_{\tilde{\gamma}}$ is a EWMT-surface. Moreover, $\tilde{\rho}(s_{0}) = \rho(s_{0})$,  $\tilde{\rho}_{s}(s_{0}) = \rho_{s}(s_{0})$, $\tilde{t}(s_{0}) = t(s_{0})$ and $\tilde{t}_{s}(s_{0}) = t_{s}(s_{0})$. Therefore, we deduce from uniqueness of solution for the equation \eqref{FunctionQ} that the surface $\Sigma_{\gamma}$ and $\Sigma_{\tilde{\gamma}}$ are locally equal  around the point $\gamma(s_{0}) = \tilde{\gamma}(s_{0})$. Finally, from equation \eqref{e6} and the maximum principle, we obtain that $\Sigma_{\gamma} = \Sigma_{\tilde{\gamma}}$ globally.
\end{proof}

\noindent So far we have focused on the study of the derivatives of the functions $\rho(s)$ and $t(s)$, now we focus on the study such functions.

%we control the sign of the first and second derivatives of the functions $\rho(s)$ and $t(s)$,

\begin{proposition}\label{l6}
Let $\Sigma_\gamma\looparrowright\irf$ be a rotationally-invariant, EWMT-surface immersed into the warped product $\irf$, where the warping function satisfies $h_{\rho}(\rho)>0$ for all $\rho\in\R$. Then, the function $\rho(s)$ satisfy
\begin{equation*}
\lim_{s\to +\infty}\rho(s)=+\infty \hspace{.3cm} \textnormal{and} \hspace{.3cm} \lim_{s\to -\infty}\rho(s)=+\infty.
\end{equation*}
\end{proposition}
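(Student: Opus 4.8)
The plan is to show that $\rho(s)$ is unbounded in both directions by a contradiction argument, using the fact (from Lemma \ref{l5} and Corollary \ref{rmkri}) that $\rho_s(s) < 0$ on $(-\infty, s_0)$ and $\rho_s(s) > 0$ on $(s_0, +\infty)$, so $\rho$ is monotone on each half-line and hence admits a limit $\rho_\pm := \lim_{s\to\pm\infty}\rho(s) \in (0,+\infty]$. It suffices to rule out $\rho_+ < +\infty$; the case $\rho_- < +\infty$ is symmetric (indeed, by Proposition \ref{Psym} the surface is symmetric about the slice $S(t(s_0))$, so the two limits are equal and one case implies the other). So suppose, for contradiction, that $\rho_+ =: \ell < +\infty$.

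First I would extract the consequences of boundedness. On $(s_0,+\infty)$ we have $\rho_s > 0$ with $\rho(s) \uparrow \ell$, so $\rho$ ranges in the fixed compact interval $[\rho_0, \ell]$; since $h$ is smooth, $h(\rho(s))$, $h_\rho(\rho(s))$ and $e^{h(\rho(s))}$ are all bounded on $[\rho_0,\ell]$, say by constants depending only on $\ell$. From the arclength relation $e^{2h}t_s^2 + \rho_s^2 = 1$ we get $0 < t_s \le e^{-h} \le C$ bounded, and $\rho_s \to 0$ necessarily (a strictly increasing bounded $C^1$ function with a limit forces $\liminf \rho_s = 0$; combined with $\rho_{ss}>0$ from \eqref{e14}, which says $\rho_s$ is strictly increasing, this is impossible once $\rho_s$ would have to increase past $0$ — wait: $\rho_s>0$ and strictly increasing cannot tend to $0$). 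This is the heart of the matter: \eqref{e14} gives $\rho_{ss}(s) > 0$ for all $s$, so $\rho_s$ is strictly increasing on all of $\R$; on $(s_0,+\infty)$ it is therefore bounded below by $\rho_s(s_1) > 0$ for any fixed $s_1 > s_0$, whence $\rho(s) \ge \rho(s_1) + \rho_s(s_1)(s - s_1) \to +\infty$, contradicting $\rho_+ < +\infty$.

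Thus the only genuine work is confirming that \eqref{e14} — the inequality $\rho_{ss} > 0$ everywhere — is available under the hypothesis $h_\rho > 0$, which it is by item (3) of Corollary \ref{rmkri} (itself a consequence of $\kappa_1 < 0$ via \eqref{k11} and $h_\rho > 0$). I would present the argument cleanly as: (i) recall $\rho$ is monotone on each side of $s_0$ with $\rho_s(s_0)=0$; (ii) recall $\rho_{ss} > 0$ on $\R$, so $\rho_s$ is strictly increasing, hence $\rho_s(s) > 0$ grows and in fact $\rho_s(s) \ge \rho_s(s_1)$ for $s \ge s_1 > s_0$; (iii) integrate to get the linear lower bound $\rho(s) \to +\infty$ as $s\to+\infty$; (iv) by symmetry (Proposition \ref{Psym}), or by the mirror-image argument using $\rho_s(s) \le \rho_s(s_2) < 0$ for $s \le s_2 < s_0$, conclude $\rho(s) \to +\infty$ as $s \to -\infty$ as well.

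The main obstacle — really the only subtlety — is making sure the domain of $\gamma$ is all of $\R$ so that the limits $s\to\pm\infty$ make sense; this is precisely the content of the Remark following Lemma \ref{l2} (there is no compact boundaryless EWMT-surface, and the complete surface $\Sigma_\gamma$ has $Dom(\gamma) = \R$), so I would invoke it at the outset. Everything else is the elementary observation that a function with strictly positive second derivative and a critical point cannot have bounded monotone tails.
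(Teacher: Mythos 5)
Your proof is correct and rests on the same key fact as the paper's: Corollary \ref{rmkri} gives $\rho_{ss}>0$ everywhere, so $\rho_s$ is strictly increasing and hence bounded below by $\rho_s(s_1)>0$ on $(s_1,+\infty)$, which is incompatible with $\rho$ staying bounded. The only cosmetic difference is that you integrate directly to get a linear lower bound, while the paper argues by contradiction via the Mean Value Theorem (boundedness would force $\rho_s(\tilde s_n)\to 0$ along a sequence, contradicting monotonicity of $\rho_s$); the mathematics is the same.
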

\begin{proof}
From corollary \ref{rmkri}, we have $\rho_{ss}(s)>0$ for all $s\in\R$.  Suppose that the function $\rho(s)$ is bounded on $(s_0,+\infty)$ (on $(-\infty,s_0)$)  where we have $\rho_s(s)>0$ (for the other case, $\rho_s(s)<0$). Since $\rho(s)$ is monotone and bounded function, we get that $\rho(s)\to\rho_1$ as $s\to  \infty$, for some positive constant $\rho_1$. 

\noindent Now, fix $s_{1} \in (s_{0},\infty)$ and define the sequence $s_{n} = s_{1}+n-1$, hence $s_{n} \rightarrow \infty$ when $n \rightarrow \infty$. By the Mean Value Theorem, there exist $\tilde{s}_{n} \in (s_{n}, s_{n+1})$ so that $\rho_{s}(\tilde{s}_{n}) = \rho(s_{n+1}) - \rho(s_{n})$, since $\rho(s_{n}) \rightarrow \rho_{1}$ when $n \rightarrow  \infty$, consequently, the sequence $\rho_{s}(\tilde{s}_{n})$ converges to zero when $n \rightarrow  \infty$. But this a contradiction, because $\rho_{s}(s)$ is a strictly increasing function and this implies that $\rho_{s}(s)$ is far away from 0 when $s$ goes to $\infty$. The case $(-\infty,s_0)$ is analogous and we complete the proof.
\end{proof}

%\begin{itemize}
%\item Suppose  that the function $\rho(s)$ is bounded on $(-\infty,s_0)$ (on $(s_0,+\infty)$)  where we have $\rho_s(s)<0$ (in this case, $\rho_s(s)>0$).
%\item Thus $\rho(s)$ is monotone and bounded, which implies that $\rho(s)\to\rho_1$ as $s\to\pm\infty$, for some positive $\rho_1$. 
%\item The intermediate value theorem ensures the existence of a sequence $\{s_n\}\subset(-\infty,s_0)$ ($\{s_n\}\subset(s_0,+\infty)$) such that $\rho_s(s_n)\to0$ as $s_n\to\pm\infty$.
%\item But $\rho_s(s)$ is increasing, which implies that $\rho_s(s)$ is far away from $0$ when $s$ goes to $\pm\infty$, contradiction.
%\end{itemize}

\noindent Finally, we focus on the behavior of the function $t=t(s)$ when the warping function is bounded from below. 

\begin{proposition}\label{l7}
Let $\Sigma_\gamma\looparrowright\irf$ be a rotationally-invariant EWMT-surface immersed into $\irf$. Suppose that the warping  satisfies $h_{\rho}(\rho)>c$ for all $\rho\in\R$, where $c$ is a positive constant. Then, the limits  $\displaystyle{\lim_{s\to +\infty}t(s)}$ and $\displaystyle{\lim_{s\to -\infty}t(s)}$ exists and $\displaystyle{\lim_{s\to -\infty}t(s) = -\lim_{s\to +\infty}t(s)}$.
%\begin{enumerate}
%\item The function $t_{ss}(s)$ satisfies
%\begin{equation*}
%t_{ss}(s)<0, \hspace{.3cm} \forall s\in (s_0,+\infty).
%\end{equation*}
%\item The function $t_s(s)$ goes to $0$, as $s$ goes to $\pm\infty$.
%\item The function $t(s)$ goes to some fixed $\widehat{t}_1\in\R$, as $s$ goes to $+\infty$.
%\item The function $t(s)$ goes to some fixed %$\widehat{t}_0\in\R$, as $s$ goes to $-\infty$.
%\end{enumerate}
\end{proposition}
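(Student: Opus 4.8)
The plan is to show that $t(s)$ is bounded above and strictly increasing — hence convergent — as $s\to+\infty$, and then to obtain the limit at $s\to-\infty$, together with the asserted identity, from the plane of symmetry of Proposition \ref{Psym}. Throughout we use that $h_\rho>c>0$ implies $h_\rho>0$, so the results of the previous part (Lemma \ref{l5}, Corollary \ref{rmkri}, Proposition \ref{Psym}) all apply, and that $t_s>0$, so $t(s)$ is strictly increasing.

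First I would extract a pointwise bound on $t_s$. From the arclength relation $e^{2h(\rho(s))}t_s^2(s)+\rho_s^2(s)=1$ and $\rho_s^2\ge0$ we get $0<t_s(s)\le e^{-h(\rho(s))}$ for every $s$. Since $\rho(s)\ge\rho_0:=\rho(s_0)$ for all $s$ by Lemma \ref{l5}, integrating the inequality $h_\rho>c$ from $\rho_0$ to $\rho(s)$ yields $h(\rho(s))\ge h(\rho_0)+c\,(\rho(s)-\rho_0)$, and therefore
\begin{equation*}
0<t_s(s)\le C_0\,e^{-c\,\rho(s)},\qquad C_0:=e^{\,c\rho_0-h(\rho_0)}.
\end{equation*}

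Next I would show that $\rho(s)$ escapes to $+\infty$ at least linearly. By Corollary \ref{rmkri}(3) we have $\rho_{ss}>0$ on $\R$, so $\rho_s$ is strictly increasing, and since $\rho_s(s_0)=0$ it is positive on $(s_0,+\infty)$. Fixing $s_1>s_0$ and setting $m:=\rho_s(s_1)>0$, monotonicity gives $\rho(s)\ge\rho(s_1)+m\,(s-s_1)$ for $s\ge s_1$. Inserting this into the estimate above, $t_s(s)\le C_0\,e^{-c\rho(s_1)}\,e^{-cm(s-s_1)}$ for $s\ge s_1$, which is integrable on $[s_1,+\infty)$. Hence $t(s)=t(s_1)+\int_{s_1}^{s}t_\sigma\,d\sigma$ is bounded; being increasing, it converges to a finite limit $t_+:=\lim_{s\to+\infty}t(s)$. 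For the other end, up to the vertical translation $t\mapsto t-t(s_0)$ — an isometry of $\irf$ under which the EWMT equation is invariant — we may assume $t(s_0)=0$, so Proposition \ref{Psym} gives $t(s)=-t(2s_0-s)$ for all $s$; letting $s\to-\infty$, the argument $2s_0-s\to+\infty$, whence $\lim_{s\to-\infty}t(s)=-t_+$. This is simultaneously the existence of the limit at $-\infty$ and the identity $\lim_{s\to-\infty}t(s)=-\lim_{s\to+\infty}t(s)$.

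The step I expect to be the main obstacle is the linear lower bound on $\rho(s)$: a priori $\rho$ might tend to $+\infty$ so slowly that $e^{-c\rho(s)}$ fails to have finite integral over $[s_1,+\infty)$, and it is precisely the convexity $\rho_{ss}>0$ of Corollary \ref{rmkri} — equivalently $\kappa_1<0$ combined with $h_\rho>0$ — that excludes this.
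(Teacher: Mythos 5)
Your proof is correct, and it takes a genuinely different and more elementary route than the paper's. The paper works through the Weingarten relation: it first shows $\rho_{ss}\to 0$, then argues by contradiction on the sign of $\kappa_1$ that $t_s\to 0$, deduces $\kappa_1,\kappa_2\to 0$, and only then invokes ellipticity ($f\in C^1$, $f(0)=0$, so $|f(t^2)|<\tfrac12|t|$ near $0$) to obtain $\kappa_1\le-\tfrac13\kappa_2$ for large $s$, which converts into the differential inequality $t_{ss}/t_s\le -2h_\rho\rho_s<m<0$; integrating twice yields $t_s<ae^{ms}$ and hence boundedness of $t$. You bypass all of the curvature asymptotics and the ellipticity step: the arclength relation alone gives $0<t_s\le e^{-h(\rho(s))}$, the hypothesis $h_\rho>c$ turns this into $t_s\le C_0e^{-c\rho(s)}$, and the convexity $\rho_{ss}>0$ from Corollary \ref{rmkri} (the only place the EWMT property enters, via $\kappa_1<0$) supplies the linear lower bound $\rho(s)\ge\rho(s_1)+m(s-s_1)$ that makes $t_s$ integrable. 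Both arguments ultimately produce the same exponential decay of $t_s$ and use $h_\rho>c$ in an essential way, but yours is shorter and isolates more clearly that the finiteness of the height comes from the metric coefficient $e^{2h}$ in the arclength relation rather than from the Weingarten equation itself; the paper's route has the side benefit of establishing $t_s\to0$ and $\kappa_i\to0$, which are not needed for the statement. You are also slightly more careful than the paper on the final identity, noting explicitly that the normalization $t(s_0)=0$ (harmless, by vertical translation) is what turns the symmetry $t(s)=2t(s_0)-t(2s_0-s)$ of Proposition \ref{Psym} into $\lim_{s\to-\infty}t(s)=-\lim_{s\to+\infty}t(s)$.
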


\begin{proof} %First, we show that $\displaystyle{\lim_{s \rightarrow \infty} t(s)}$ exists. By the one hand

From corollary \ref{rmkri}, $\rho_{s}(s)$ is a strictly increasing function. Moreover, since $$e^{2h(\rho(s))}t_s^2(s)+\rho_s^2(s)=1,$$ then $\rho_{s}(s)$ is bounded by 1 and we conclude that the limit  $\displaystyle{\lim_{s \rightarrow \infty} \rho_{s}(s)}$ exists and this immediately implies that $\displaystyle{\lim_{s \rightarrow \infty} \rho_{ss}(s)} = 0$. Now we want to show the following claim:  %By other hand, from assumption, $h$ is a bounded function and we conclude that $t_{s}$ is a bounded function such that $t_{s}(s) >0$ for each $s$. 

\noindent {\bf Claim.} $\displaystyle{\lim_{s \rightarrow \infty} t_{s}(s) = 0}$.

\noindent In fact; suppose that $\displaystyle{\lim_{s \rightarrow \infty} t_{s}(s) \neq 0}$ and consider the principal curvatures:

\begin{eqnarray*}
\kappa_1(s) & = &\left(\dfrac{e^{2h}h_\rho t_s^2-\rho_{ss}}{e^ht_s}  \right)(s), \\
\kappa_2(s) & = & \left(\dfrac{e^ht_s}{\rho}\right)(s). 
\end{eqnarray*}
Since  $\displaystyle{\lim_{s \rightarrow \infty} \rho_{ss}(s)} = 0$, we have that $k_{1}(s)$ goes to  $h_{\rho} t_{s} e^{h}$ when $s$ goes to infinity. Besides, as $h_{\rho}>c$, $t_{s}(s)$ goes to a positive number when $s \rightarrow \infty$, $h(\rho)$ is a strictly increasing function and  $\displaystyle{\lim_{s \rightarrow \infty} \rho(s)} = \infty$, we obtain that $k_{1}(s)$ is positive for some $s$ bigger enough. In the same way, we see that $k_{2}$ is always a positive function. So, for $s$ bigger enough, the functions $k_{1}(s)$ and $k_{2}(s)$ have positive sign and this  contradicts the corollary \ref{rmkri} and show the claim.%and from assumption $\displaystyle{\lim_{s \rightarrow \infty} t_{s}(s) \neq 0}$

\noindent By other hand, from the above claim, the proposition \ref{l6}, the equation $e^{2h}t_{s}^{2}= 1 - \rho^{2}_{s}$ and the boundeness of $\rho_{s}(s)$ imply that $\kappa_2(s)  =  \left(\dfrac{e^ht_s}{\rho}\right)(s)$ goes to zero when $s$ goes to infinity. In the same way, from corollary \ref{c1},  $\kappa_{1}(s)$ goes to zero whenever $s$ goes to infinity.  Now, since $f \in C^{1}(-\epsilon,\infty)$ and $f(0)=0$, if we choose a small $\delta>0$, then for each $\lvert t \rvert < \delta$, we have $\lvert f(t^{2}) \rvert < \frac{1}{2} \lvert t \rvert$. Hence, for $s$ big enough, we obtain that $\frac{\lvert \kappa_{1}(s) - \kappa_{2}(s) \rvert}{2} < \delta$ and 

\begin{equation*}
- \frac{\lvert \kappa_{1}(s) - \kappa_{2}(s) \rvert }{2} \leq \kappa_{1}(s) + \kappa_{2}(s) = 2f \Big( \frac{ (\kappa_{1}(s) - \kappa_{2}(s))^{2} }{4}  \Big) \leq \frac{\lvert \kappa_{1}(s) - \kappa_{2}(s) \rvert }{2}.   
\end{equation*}
The inequality $\kappa_{1}(s)+ \kappa_{2}(s) \leq  \frac{\lvert \kappa_{1}(s) - \kappa_{2}(s) \rvert }{2}$ and the supposition $t_{s} > 0$ imply $\kappa_{1}(s) \leq -\frac{1}{3}\kappa_{2}(s)$. Therefore, from expressions by $\kappa_{1}(s)$ and $\kappa_{2}(s)$ in equations \eqref{k33} and in \eqref{k44} , we get

\begin{equation*}
\frac{t_{ss}}{\rho_{s}} \leq -\frac{1}{3} \frac{t_{s}}{\rho} - 2 h_{\rho} t_{s},
\end{equation*}

%\noindent Now, the function $\kappa_1(s) = e^h\left( \dfrac{t_{ss}}{\rho_s}+2h_\rho t_s \right)$ is negative and the function $\rho_{s}$ is bounded by 1, then

%\begin{equation*}
%\frac{t_{ss}}{t_{s}} < -2 h_{p} \rho_{s} \leq -2h_{\rho},     
%\end{equation*}

\noindent as $t_{s}>0$ and $\rho_{s}>0$, then

\begin{equation}
\label{IN1}
\frac{t_{ss}}{t_{s}} \leq -\frac{1}{3}  \frac{\rho_{s}}{\rho} - 2h_{\rho} \rho_{s} \leq -2 h_{\rho} \rho_{s} ,    \end{equation}

\noindent  since $\displaystyle{\lim_{s \rightarrow \infty} \rho_{s}(s)}$ is a positive number and $h_{p}> c$, hence from inequality \ref{IN1}, there is a negative number $m$ such that

\begin{equation}
\label{IN11}
\frac{t_{ss}}{t_{s}} < m, \, \, {\text for} \, \,  s > s_{1} 
\end{equation}

\noindent where $s_{1}$ is big enough. By integrating respect to $s$ in the two sides of inequality \eqref{IN11} in the interval $(s_{1},s)$, we conclude that

\begin{equation}
\label{IN2}
t_{s} < ae^{ms},     
\end{equation}

\noindent where $a$ is a positive constant. Now, integrating the inequality \eqref{IN2} in the interval $(s_{1},s)$ and after some calculations, we get

\begin{equation}
\label{IN3}
t(s) < t(s_{1}) + \frac{a}{m} (e^{ms} - e^{ms_{1}} ).    
\end{equation}

\noindent Since $m$ is negative and $t(s)$ is a strictly increasing function, consequently, from inequality \eqref{IN3}, we obtain that $\displaystyle{\lim_{s \rightarrow \infty} {t(s)}}$ exists. Finally, by proposition \ref{Psym}, the limit $\displaystyle{\lim_{s \rightarrow -\infty} {t(s)}}$ exists too and by the symmetry it is equal to $-\displaystyle{\lim_{s \rightarrow \infty} {t(s)}}$. This concludes the proof.
\end{proof}

\noindent We point out that the results in lemma \ref{l5}, corollary \ref{c2} and propositions \ref{Psym}, \ref{l6} and \ref{l7} holds for the warped product $\irf$ when the warping function satisfies $h_{\rho}> c$, where $c$ is positive constant. Consequently, we get the following general result.

\begin{theorem}[Main Theorem]
\label{MT1}
 Let $h: \mathbb{R}^{2} \rightarrow \mathbb{R}$ be a radial symmetry warping function such that its derivative satisfy $h_\rho(\rho)\ge c$ for $\rho\in\R$ and some positive constant $c$, and let $\rho_0>0$ be a number such that $h(\rho_{0}) \cdot \rho_{0} \ge 1$. Then there exists a unique  rotationally-invariant around to $t$-axis EWMT-surface $\Sigma_\gamma^{\rho_0}$ immersed in $\irf$ generated by the curve $\gamma=(\rho(t),0,t)$ which is the graph of the function $\rho(t)$. Moreover, the function $\rho(t)$ has the following properties: 

\begin{enumerate}
    \item the curve $\gamma$ is convex and has a global minimum at $(\rho_0,0,0)$,
    \item the function $\rho(t)$ is defined in some interval $I=(-t_0,t_0)$ for $t_0>0$,
    \item $\gamma$ is asymptotic to the line $t=\pm t_0$ when $t$ goes to $\pm\infty$,
    \item finally, $\gamma$ is symmetric with respect to the slice $\{t=0\}$. 
\end{enumerate}
\end{theorem}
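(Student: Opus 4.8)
The plan is to assemble the statement from the results already obtained in this section; the single step not yet carried out explicitly is an elementary change of parameter, from arclength to the height $t$. First, since $h_\rho(\rho)\ge c>0$, the hypothesis $h_\rho(\rho_0)\,\rho_0\ge1$ of Theorem~\ref{EUT} is met at $\rho_0$, so there is a unique rotationally-invariant EWMT-surface $\Sigma_\gamma^{\rho_0}$ whose generating curve is, near $t=0$, the graph of a function $\rho=\rho(t)$ with $\rho(0)=\rho_0>0$, $\rho_t(0)=0$; composing with a vertical translation (an isometry of $\irf$) we normalise this minimum to lie at height $0$. The same hypothesis makes $h$ non-constant, and $\Sigma_\gamma^{\rho_0}$ is not a slice: in the arclength parametrization $\gamma(s)=(\rho(s),0,t(s))$ the identity $e^{2h}t_s^2+\rho_s^2=1$ forces $t_s\neq0$ wherever $\rho_s=0$, in particular at the minimum. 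Hence Lemma~\ref{l2} applies and $t_s$ never vanishes at an interior point; choosing the arclength orientation so that $t_s>0$, and using the completeness remark following Lemma~\ref{l2} (so $\gamma$ is defined for all $s\in\R$), the height $t$ is a global parameter along $\gamma$. Thus $\Sigma_\gamma^{\rho_0}$ is the vertical graph $\rho=\rho(t)$ solving \eqref{ge1}, and its uniqueness (up to vertical translation) is exactly that provided by Theorem~\ref{EUT}.

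For item~(1), Lemma~\ref{l5} (applicable since $h_\rho>0$) gives that $\rho(s)$ has a single critical point $s_0$, a global minimum, which corresponds to $t=0$; hence $\gamma$ has a global minimum at $(\rho_0,0,0)$, and Corollary~\ref{c2} shows $\rho(s)>0$ for all $s$, so the curve never reaches the axis of revolution. For the convexity of $\gamma$ I would write $\rho$ as a function of $t$ and compute
\begin{equation*}
\frac{d^{2}\rho}{dt^{2}}=\frac{\rho_{ss}\,t_s-\rho_s\,t_{ss}}{t_s^{3}} .
\end{equation*}
Corollary~\ref{rmkri} furnishes precisely the signs needed: $\rho_{ss}>0$ and $t_s>0$ everywhere, while $\rho_s<0,\ t_{ss}>0$ on $\{s<s_0\}$, $\rho_s>0,\ t_{ss}<0$ on $\{s>s_0\}$, and $\rho_s(s_0)=t_{ss}(s_0)=0$. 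In each of the three regimes the numerator is strictly positive, so $d^{2}\rho/dt^{2}>0$ and $\gamma$ is strictly convex; this finishes~(1).

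For items~(2) and~(3), Proposition~\ref{l6} gives $\rho(s)\to+\infty$ as $s\to\pm\infty$, and Proposition~\ref{l7} --- where the bound $h_\rho\ge c>0$, rather than merely $h_\rho>0$, is used --- gives that $t_0:=\lim_{s\to+\infty}t(s)$ exists and is finite; it is positive because $t$ is strictly increasing with $t(s_0)=0$, and $\lim_{s\to-\infty}t(s)=-t_0$ by the same proposition. Passing to the parameter $t$, the map $s\mapsto t(s)$ is a strictly increasing bijection of $\R$ onto $(-t_0,t_0)$, so $\rho$ is defined precisely on $I=(-t_0,t_0)$ with $\rho(t)\to+\infty$ as $t\to\pm t_0$; equivalently $\gamma$ is asymptotic to the vertical lines $t=\pm t_0$ (in~(3), ``$t\to\pm\infty$'' should be read as $\rho\to+\infty$, i.e. $s\to\pm\infty$). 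Finally, item~(4) is Proposition~\ref{Psym} applied at $s_0$: since $t(s_0)=0$ the surface is symmetric about the slice $S(0)=\{t=0\}$, i.e. $\rho(s)=\rho(2s_0-s)$ and $t(s)=-t(2s_0-s)$ for every $s$.

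Being essentially a synthesis, the argument presents no real obstacle; the two delicate points are (i) justifying that $t$ is a \emph{global} coordinate along $\gamma$, which is the combination of Lemma~\ref{l2} with the completeness of $\irf$, and (ii) checking the sign of the numerator of $d^{2}\rho/dt^{2}$ case by case on $\{s<s_0\}$, $\{s=s_0\}$ and $\{s>s_0\}$. One should also confirm that the limiting heights $\pm t_0$ are not attained, so that the domain of $\rho$ is the open interval $(-t_0,t_0)$.
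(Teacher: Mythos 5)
Your proposal is correct and follows exactly the route the paper intends: the paper offers no separate proof of Theorem~\ref{MT1}, merely asserting it as the combination of Theorem~\ref{EUT}, Lemma~\ref{l2}, Lemma~\ref{l5}, Corollary~\ref{rmkri}, Corollary~\ref{c2} and Propositions~\ref{Psym}, \ref{l6}, \ref{l7}, which is precisely the synthesis you carry out, and you in fact supply details the paper omits (the sign check of $\rho_{ss}t_s-\rho_s t_{ss}$ for convexity in the $t$-parameter, and the argument that $s\mapsto t(s)$ is a bijection onto the open interval $(-t_0,t_0)$). One small slip: the inequality $h_\rho(\rho_0)\,\rho_0\ge1$ does \emph{not} follow from $h_\rho\ge c>0$ alone as your first sentence suggests; it is a separate hypothesis of the theorem (written there, surely with a typo, as $h(\rho_0)\cdot\rho_0\ge1$), and since it is assumed outright this does not affect the validity of your argument.
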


\noindent A rotationally-invariant surface $\Sigma$ immersed $\irf$ that satisfies all properties of theorem \ref{MT1} is called a surface of catenoidal type in $\irf$.

\begin{remark}
A minimal surface can be seen as a elliptic Weingarten surface for which $f$ vanishes identically. 
\end{remark}

\subsection{Explicit examples}
In the following two examples, for the warped product $\irf$ we consider the warping function $h:\R\to\R$ given by $h(\rho)=\rho$ which it is a function that satisfies the conditions of the main theorem.

\begin{example}[Minimal surface]
First, we want to plot the rotationally-invariant minimal surface $\Sigma_\gamma$, whose generating curve $\gamma$ is the graph of the function $\rho=\rho(t)$, where $\rho(t)$ is the function that solve the equation \eqref{ge1}, given by
\begin{equation*}
\left(\rho '(t)^2+e^{2 \rho (t)}\right)+\rho (t) \left(-\rho ''(t)+2 \rho '(t)^2+e^{2 \rho (t)}\right)=0, 
\end{equation*}
with initial conditions $\rho(0)=1$, $\rho_t(0)=0$. By using mathematical software, we obtain that the graph of the generating curve is given in the Figure 1, where we have the interval $I=(-2.335,2.335)$ as the domain of the function $\rho(t)$.

\begin{figure}[h]
\includegraphics[width=7cm, height=4cm]{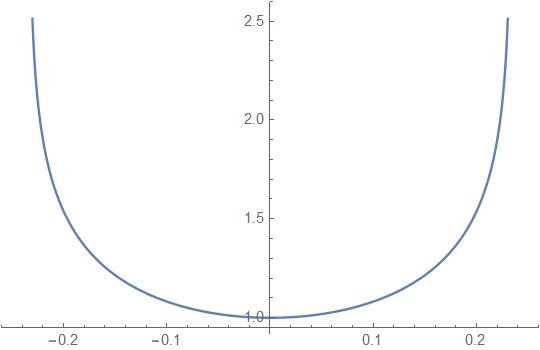}
\centering
\caption{Graph of solution $\rho(t)$ for example 1.}
\end{figure}

%\begin{figure}
%  % Requires \usepackage{graphicx}
%  \includegraphics[scale=.4]{Ex-Min-WP-01.eps}
%\end{figure}

%\begin{figure}[!h]
%\includegraphics[scale=.7]{Ex-Min-WP-01.eps}%
%\caption{Curve $\gamma$, which generates the rotational minimal surface $\Sigma_\gamma^1$ into $\irf$.}%
%\end{figure}

\end{example}

\noindent Now we plot a no-minimal EWMT-surface.

\begin{example}[EWMT-surface] In order to obtain a non-minimal EWMT-surface,  we consider the elliptic function $f(t)=\dfrac{1}{2}\sqrt{t}$. In this case, We want to plot the rotationally-invariant EWMT-surface $\Sigma_\gamma$ embedded into $\irf$, whose generating curve $\gamma$ is the graph of the function $\rho=\rho(t)$, where $\rho(t)$ is the function that solve the equation 

\begin{eqnarray*}
& & \frac{1}{2} \left(\frac{e^{\rho (t)}}{\rho (t) \sqrt{\rho '(t)^2+e^{2 \rho (t)}}}+\frac{e^{\rho (t)} \left(-\rho ''(t)+2 \rho '(t)^2+e^{2 \rho (t)}\right)}{\left(\rho '(t)^2+e^{2 \rho (t)}\right)^{3/2}}\right) \\
& - & \frac{1}{2} \sqrt{\left(\frac{e^{\rho (t)} \left(-\rho ''(t)+2 \rho '(t)^2+e^{2 \rho (t)}\right)}{\left(\rho '(t)^2+e^{2 \rho (t)}\right)^{3/2}}-\frac{e^{\rho (t)}}{\rho (t) \sqrt{\rho '(t)^2+e^{2 \rho (t)}}}\right)^2}=0
\end{eqnarray*}
with initial conditions $\rho(0)=1$, $\rho_t(0)=0$. Again, by using a standard mathematical software, we obtain that the graph of the generating curve is given in the Figure 2, where we have the interval $I=(-0.368,0.368)$ as the domain of the function $\rho(t)$.

\begin{figure}[h]
\includegraphics[width=7cm, height=4cm]{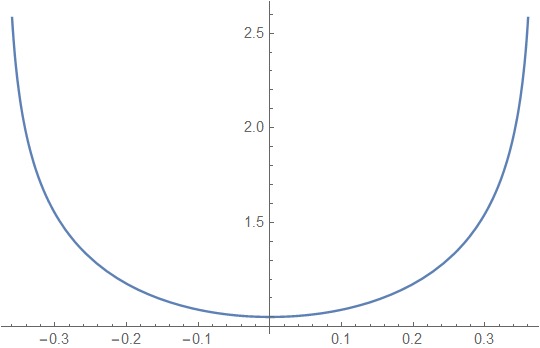}
\centering
\caption{Graph of solution $\rho(t)$ for example 2.}
\end{figure}

\end{example}

%(1) Definir o posicionamento em: (h) aqui, (t) topo, (b) base, ou na (p) página. O uso do ! siginifica para forçar a solicitação do usuário. a ordem [htb] indica a preferencia que o latex vai re-arranjar as figuras.
%
%\begin{figure}[!htbp]
%\includegraphics{filename}%
%\caption{text}%
%\end{figure}

\bibliographystyle{amsplain}
%\bibliography{}

\end{document}